\newcommand{\R}{\mathbb{R}}
\newcommand{\ds}{\displaystyle}
\newcommand{\Id}{\textrm{Id}}
\newcommand{\x}{{\bf x}}
\newcommand{\bu}{{\bf u}}
\newcommand{\bh}{{\bf h}}
\newcommand{\e}{{\bf e}}
\newcommand{\ba}{{\bf a}}
\newcommand{\bb}{{\bf b}}
\newcommand{\bc}{{\bf c}}
\newcommand{\Div}{{\rm div}}
\newcommand{\uap}{{u^{\small N}}}
\newcommand{\uapt}{{u^{\small N}_t}}
\newcommand{\uaptt}{{u^{\small N}_{tt}}}
\renewcommand{\H}{{\bf H_0}}
\newcommand{\f}{{\bf f}}
\newtheorem{Theorem}{Theorem}[section]
\newtheorem{Lemma}{Lemma}[section]
\newtheorem{Proposition}{Proposition}[section]
\newtheorem{remark}{Remark}[section]
\newtheorem*{Assumption*}{Assumption}
\newtheorem{Definition}{Definition}[section]
\newtheorem{problem}{Problem}[section]
\newtheorem*{problem*}{Problem}
\numberwithin{equation}{section}
\begin{document}

\title{The quasi-reversibility method to numerically solve an inverse source problem for hyperbolic equations   }

\author{Thuy T. Le\thanks{
Department of Mathematics and Statistics, University of North Carolina at
Charlotte, Charlotte, NC 28223, USA, tle55@uncc.edu, loc.nguyen@uncc.edu, wpowel13@uncc.edu } \and Loc H. Nguyen\footnotemark[1] \and 
\and Thi-Phong Nguyen\thanks{Department of Mathematics, Purdue University, West Lafayette, IN, 47907, nguye686@purdue.edu (corresponding
author).}
\and William Powell\footnotemark[1]}

\date{}
\maketitle

\begin{abstract}
	We propose a numerical method to solve an inverse source problem of computing the initial condition of hyperbolic equations from the measurements of Cauchy data. This problem arises in thermo- and photo- acoustic tomography in a bounded cavity, in which the reflection of the wave makes the widely-used approaches, such as the time reversal method, not applicable. 
In order to solve this inverse source problem, we approximate the solution to the hyperbolic equation by its  Fourier series with respect to a special orthonormal basis of $L^2$. 
Then, we derive a coupled system of elliptic equations for the corresponding Fourier coefficients.
We solve it by the quasi-reversibility method. 
	The desired initial condition follows.
	We rigorously prove the convergence of the quasi-reversibility method as the noise level tends to 0. 
	Some numerical examples are provided. 
	In addition, we numerically prove that the use of the special basic above is significant.
\end{abstract}

\noindent{\it Key words:}  
inverse source problem,
hyperbolic equation,
quasi-reversibility method

\noindent{\it AMS subject classification: 	35R30, 65M32} 

\section{Introduction}

We consider an inverse source problem arising from biomedical imaging based on the photo-acoustic and thermo-acoustic effects, which are named as photo-acoustic tomography (PAT) and thermo-acoustic tomograpy (TAT) respectively.
In PAT, \cite{Krugeretal:mp1995, Oraevskyelal:ps1994}, non-ionizing laser pulses are sent to a biological tissue under inspection (for instance, woman's breast in mamography).
A part of the energy will be absorbed and converted into heat,
causing a thermal expansion and a subsequence ultrasonic wave propagating in space.
The ultrasonic pressures on a surface around the tissue are measured.
The experimental set up for TAT, \cite{Krugerelal:mp1999}, is similar to PAT except the use of microwave other than laser pulses.
Finding some initial information of the pressures from these measurements yields structure inside this tissue.

Due to the important real-world applications, the inverse source problem PAT/TAT has been studied intensively.  
There are several methods to solve them available. 
In the case when the waves propagate in the free space, one can find explicit reconstruction formulas in \cite{DoKunyansky:ip2018,  Haltmeier:cma2013, Natterer:ipi2012, Linh:ipi2009}, the time reversal method \cite{ KatsnelsonNguyen:aml2018, Hristova:ip2009, HristovaKuchmentLinh:ip2006, Stefanov:ip2009, Stefanov:ip2011}, the quasi-reversibility method \cite{ClasonKlibanov:sjsc2007} and the iterative methods \cite{Huangetal:IEEE2013, Paltaufetal:ip2007, Paltaufetal:osa2002}.
The publications above study PAT/TAT for simple models for non-damping and isotropic media. 
The reader can find publications about PAT/TAT for more complicated model involving a damping term or attenuation term \cite{Ammarielal:sp2012, Ammarietal:cm2011, Haltmeier:jmiv2019, Acosta:jde2018, Burgholzer:pspie2007, Homan:ipi2013, Kowar:SISI2014, Kowar:sp2012, Nachman1990}.
The time reversibility requires an approximation of the wave at a ``final stage" in the whole medium. 
This ``internal data" might be known assuming that the reflection of the wave on the measured surface is negligible. 
However, there are many circumstances where this assumption is no longer true.
For example, when the biological tissue under consideration is located inside glass, the waves reflects as in a resonant cavity \cite{Cox:ip2007, Cox:JASA2008, Kunyansky:ip2013}. In this case, measuring or approximating final stage of the wave inside the tissue is impossible. 
We draw the reader's attention to \cite{Linh:SIAM2016} for a method to solve PAT/TAT in a bounded cavity with wave reflection. 
Our contribution in this paper is to apply the quasi-reversibility method to solve the inverse source problem of PAT/TAT for damping and nonhomogeneous media. 
In this case, the model is a full hyperbolic equation in a bounded domain.
By this, the reflection of the waves at the measurement sites is allowed.

The uniqueness and the stability for the inverse source problem for general hyperbolic equations in the damping and inhomogeneous media can be proved by using a Carleman estimate. 
These important results are the extensions of the uniqueness and stability for the inverse problem for a simple hyperbolic equation in \cite{ClasonKlibanov:sjsc2007} in the non-damping case. 
However, since the arguments are very similar to the ones in that paper using Carleman estimate for hyperbolic operators,
for the brevity, we do not write the proof here.

Instead of using the direct optimal control method, to find the initial value of general hyperbolic equations, we derive a system of elliptic partial differential equations, which are considered as an approximation model for our method. 
Solution of this system is the vector consisting of Fourier coefficients of the wave with respect to a special basis.
This system and the given Cauchy boundary data can be solved by the quasi-reversibility method. The quasi-reversibility method was first proposed by Latt\`es and Lions \cite{LattesLions:e1969}
in 1969. Since then it has been studied intensively \cite%
{Becacheelal:AIMS2015, Bourgeois:ip2005, Bourgeois:ip2006,
BourgeoisDarde:ip2010, ClasonKlibanov:sjsc2007, Dadre:ipi2016,
KlibanovKuzhuget:aa2008, Klibanov:jiipp2013}. The application of Carleman
estimates for proofs of convergence of those minimizers was first proposed
in \cite{KlibanovSantosa:SIAMJAM1991} for Laplace's equation. In particular, \cite{KlibanovMalinsky:ip1991} is the first publication where it was proposed to use Carleman estimates to obtain Lipschitz stability of solutions of hyperbolic equations with lateral Cauchy data.
We draw the reader's attention to
the paper \cite{Klibanov:anm2015} that represents a survey of the quasi-reversibility method.
Using a Carleman estimate, we prove Lipschitz-like convergence rate of regularized solutions generated by the quasi-reversibility method to the true solution of that
Cauchy problem. 

 It seems, in theory, that our method of approximation works for any orthonormal basis of $L^2$. 
 However, this observation is not true in the numerical sense. 
That means, the special basis we use to establish the approximation model is crucial.
The basis we use in this paper was first introduced by Klibanov in \cite{Klibanov:jiip2017}, called $\{\Psi_n\}_{n \geq 1}$.
It has a very important property that $\Psi'_n$ is not identically $0$ in an open interval while other bases; for e.g., trigonometric functions and orthonormal polynomials, do not.
In this paper, we prove numerically that choosing this basis is optimal for our method.

As mentioned, we establish in this paper the Lipschitz convergence of the quasi-reversibility method.
Our main contribution to this field is to relax a technical condition on the noise.
In our previous works \cite{NguyenLiKlibanov:IPI2019, LiNguyen:IPSE2019, KlibanovNguyen:ip2019, KhoaKlibanovLoc:SIAMImaging2020, KlibanovLeNguyen:SIAM2020} and references therein, we assumed that the noise contained in the boundary data can be ``smoothly extended" as a function defined on the domain $\Omega$. 
This condition implies that the noise must be smooth.
Motivated by the fact that this assumption is not always true, we employ a Carleman estimate involving the boundary integrals to obtain the new convergence without imposing this ``extension" condition of the noise function.

The paper is organized as follows.
In Section \ref{sec med}, we state the inverse problems under consideration  and derive an approximation model whose solution directly yields their solutions. 
In Section \ref{sec Car}, we introduce some auxiliary results and prove the Carleman estimate, which plays an important role in our analysis.
In Section \ref{sec quasi}, we implement the quasi-reversibility method to solve the system of elliptic equations and prove the convergence of the solution as the noise level tends to $0$. 
Section \ref{sec num} is for the numerical studies. 
Section \ref{sec Klibanov} is to provide some numerical results for $1D-$ problem and to show the significance of the used orthonormal basis.
Finally, section \ref{sec con} is for the concluding remarks.

\section{The Problem statements and a numerical approach} \label{sec med}

Let $\Omega$ be a smooth and bounded domain in $\R^d$, where $d \geq 1$ is the spatial dimension, and $T$ be a positive number.
Let $1 \leq \ba \in C^1(\Omega)$, and $a, \bc \in L^{\infty}(\Omega)$ be functions defined on $\Omega$.
Let $\bb$ be a $d$-dimensional vector valued function in $L^{\infty}(\Omega, \R^d).$
Define the elliptic operator 
\begin{equation}
	L\phi := \Delta \phi + \bb \cdot \nabla \phi + \bc \phi
	\label{L def}
\end{equation}
for all $\phi \in C^2(\overline \Omega)$. Let $p \in H^2_0(\Omega)$ represent a source, we consider the problems of solving $u(\x, t) \in H^2(\Omega \times (0,T))$ generated by the source $p(\x) \in C_0^2(\Omega)$ and subjected to either homogeneous Dirichlet or Neumann boundary condition, which are given by
\begin{equation}
	\left\{
		\begin{array}{rcll}
			\ba(\x)u_{tt}(\x, t) + a(\x) u_t(\x)&=& Lu(\x, t) & (\x, t) \in \Omega \times (0,T)\\
			u_t(\x, 0) &=& 0 &\x \in \Omega,\\
			u(\x, 0) &=& p(\x) &\x \in \Omega,\\
			u(\x, t) &=& 0 &(\x, t) \in \partial \Omega \times [0, T],
		\end{array}
	\right.
	\label{1.1}
\end{equation}
or
\begin{equation}
	\left\{
		\begin{array}{rcll}
			\ba(\x)u_{tt}(\x, t) + a(\x) u_t(\x)&=& Lu(\x, t) &(\x, t) \in \Omega \times (0,T)\\
			u_t(\x, 0) &=& 0 &\x \in \Omega,\\
			u(\x, 0) &=& p(\x) &\x \in \Omega,\\
			\partial_{\nu} u(\x, t) &=& 0 &(\x, t) \in \partial \Omega \times [0, T].
		\end{array}
	\right.
	\label{1,2}
\end{equation} respectively. Our interest is to determine the source $p(\x)$, $\x \in \Omega$, from some boundary observation of the wave. More precisely, the inverse source problems are formulated as:
\begin{problem}
Determine the function $p$ from the measurement of 
	\begin{equation}
		f_1(\x, t) = \partial_{\nu} u(\x, t) \quad (\x, t) \in \partial \Omega \times [0, T]
		\label{1.2}
	\end{equation}
	where $u$ is the solution of \eqref{1.1}.
	\label{pro isp Dir}
\end{problem}

\begin{problem}
Determine the function $p$ from the measurement of 
	\begin{equation}
		f_2(\x, t) =  u(\x, t) \quad (\x, t) \in \partial \Omega \times [0, T]
		\label{1,3}
	\end{equation}
	where $u$ is the solution of \eqref{1,2}.
	\label{pro isp Neu}
\end{problem}

The unique solvability of problems \eqref{1.1} and \eqref{1,2} can be obtained by Garlerkin approximations and energy estimates as in Chapter 7, Section 7.2 in \cite{Evans:PDEs2010}. We now focus on our approach for solving these two inverse problems.
\medskip


Let $\{\Psi_n\}_{n \geq 1}$ be an orthonormal basis of $L^2(0, T).$
The function $u(\x,t)$ can be represented as:
\begin{equation}
	u(\x, t) = \sum_{n = 1}^{\infty} u_n(\x) \Psi_n(t), \quad  \mbox{for all } (\x, t) \in \Omega  \times [0, T]
	\label{2.1}
\end{equation}
where 
\begin{equation}
	u_n(\x) = \int_0^T u(\x, t)\Psi_n(t)dt, \quad n \geq 1.
	\label{2,2}
\end{equation}
 Consider 
\begin{equation}
	\uap(\x, t) := \sum_{n = 1}^N u_n(\x) \Psi_n(t) \quad \mbox{for all } (\x, t) \in \Omega \times [0,T],
	\label{2.2222}
\end{equation}
for some cut-off number $N$.
This number $N$ is chosen numerically such that $\uap$ well-approximates the function $u$, see Section \ref{sec 5.1} for more details.
We have,
\begin{equation}
	\uapt(\x, t) = \sum_{n = 1}^N u_n(\x) \Psi_n'(t),\quad \text{and} \quad \uaptt(\x, t) = \sum_{n = 1}^N u_n(\x) \Psi_n''(t), \quad \mbox{for all } (\x, t) \in \Omega \times [0,T].
	\label{2.2}
\end{equation}
Plugging \eqref{2.2222} and \eqref{2.2} into the first equation of problem  \eqref{1.1}, we get
\begin{equation}
	\sum_{n = 1}^N \ba(\x) u_n(\x) \Psi_n''(t) 
	+ a(\x) \sum_{n = 1}^N u_n(\x) \Psi_n'(t) = \sum_{n = 1}^N Lu_n(\x) \Psi_n(t)
	\label{2.3}
\end{equation}
for all $(\x, t) \in \Omega \times [0,T]$. 
\begin{remark}
	Equation \eqref{2.3} is actually an approximation model. We only solve Problem \ref{pro isp Dir} and Problem \ref{pro isp Neu} in this approximation context. 
	Studying the behavior of \eqref{2.3} as $N \to \infty$ is extremely challenging and out of the scope of the paper.
	In case of interesting, the reader could follow the techniques in \cite{KlibanovLiem:arxiv2020} to investigate the accuracy of   \eqref{2.5} as $N$ tends to $\infty.$
	\label{rem 2.1}
\end{remark}

Since $\{\Psi_n\}_{n \geq 1}$ is an orthonormal basis of $L^2$, multiplying $\Psi_m(t)$ to both sides of \eqref{2.3} and then integrating the resulting equation with respect to $t$, for each $m \in \{1, \dots, N\},$ yields
\begin{equation}
	\sum_{n = 1}^N s_{mn}u_n(\x)  = Lu_m(\x), \quad \mbox{for all } \x \in \Omega
	\label{2.5}
\end{equation}
where
\[
	s_{mn}(\x) = \int_{0}^T \big[\ba(\x)\Psi''_n(t) + a(\x) \Psi_n'(t)\big]\Psi_m(t)dt.
\] Furthermore, from \eqref{2,2} we have 
\begin{equation*}
	\partial_{\nu} u_n(\x) = \int_0^T \partial_{\nu} u(\x, t) \Psi_n(t) dt, \quad \forall \, \x \in \partial \Omega. 
\end{equation*} Therefore, the Cauchy data of $u_n$, for all $n = 1, \ldots, N$ on the boundary $\partial \Omega$ are determined by: 
\begin{enumerate}
	\item Regarding Problem \ref{pro isp Dir}
	\begin{equation}
		\left\{
			\begin{array}{rcl}
				\partial_{\nu} u_n(\x) &=&	\ds \int_0^T f_1(\x, t) \Psi_n(t) dt,\\
				u_n(\x) &=& 0 
			\end{array}
		\right.
	\label{data Neu}
	\end{equation}
	\item Regarding Problem \ref{pro isp Neu}
	\begin{equation}
		\left\{
			\begin{array}{rcl}
				u_n(\x) &=& \ds \int_0^T f_2(\x, t) \Psi_n(t) dt,\\
				\partial_{\nu} u_n(\x) &=&	0.				
			\end{array}
		\right.
		\label{data Dir}
	\end{equation}
\end{enumerate}
The system of elliptic partial differential equations \eqref{2.5} together with Cauchy data either \eqref{data Neu} or \eqref{data Dir} is our approximation model, see Remark \ref{rem 2.1}. It allows to determine coefficients $u_n$, for all $n = 1, \ldots, N$, and then the approximation $u^N(\x,t)$ of $u(\x, t)$.
The source term will be given by $u^N(\x, 0).$
In summary, the numerical method for solving Problem \ref{pro isp Dir} and Problem \ref{pro isp Dir} is described in Algorithm \ref{alg 1} and Algorithm \ref{alg 2}  below respectively.

\begin{algorithm}[h!]
	\caption{\label{alg 1} A numerical method to solve Problem \ref{pro isp Dir}}
	\begin{algorithmic}[1]
	\State \label{step basis} Choose the basis $\{\Psi\}_{n \geq 1}$ and a cut-off number $N$.
	\State \label{step 1} Solve the system \eqref{2.5} with Cauchy data \eqref{data Neu} for the vector valued function 
	\[\bu^{\rm comp}(\x) = (u_1^{\rm comp}, \dots, u_N^{\rm comp}), \quad \x \in \Omega.\]  
	\State \label{step 4} The function $p^{\rm comp}(\x)$ is given by \[p^{\rm comp}(\x) = u^{\rm comp}(\x, 0) = \sum_{n = 1}^N u_n^{\rm comp}(\x) \Psi_n(0), \quad \x \in \Omega.\]
 	\end{algorithmic}
\end{algorithm}

\begin{algorithm}[h!]
	\caption{\label{alg 2} A numerical method to solve Problem \ref{pro isp Neu}}
	\begin{algorithmic}[1]
	\State Choose the basis $\{\Psi\}_{n \geq 1}$ and a cut-off number $N$.
	\State Solve the system \eqref{2.5} with Cauchy data \eqref{data Dir}  for the vector valued function 
	\[\bu^{\rm comp}(\x) = (u_1^{\rm comp}, \dots, u_N^{\rm comp}), \quad \x \in \Omega.\]  
	\State  The function $p^{\rm comp}(\x)$ is given by \[p^{\rm comp}(\x) = u^{\rm comp}(\x, 0) = \sum_{n = 1}^N u_n^{\rm comp}(\x) \Psi_n(0), \quad \x \in \Omega.\]
 	\end{algorithmic}
\end{algorithm}

\begin{remark}
\label{rm2.1}
	In Step \ref{step basis} of Algorithm \ref{alg 1} and Algorithm \ref{alg 2}, we choose the basis $\{\Psi_n\}_{n \geq 1}$ taken from  \cite{Klibanov:jiip2017}. 
	The cut-off number $N$ is chosen numerically. 
	More details will be discussed in Section \ref{sec num}. 
	In Step \ref{step 1} of these algorithms, we apply the quasi-reversibility method to solve \eqref{2.5} and \eqref{data Neu} and \eqref{2.5} and \eqref{data Dir}.
	The analysis about about the quasi-reversibility method and its convergence as the noise in the given data tends to $0$ are discussed in Section \ref{sec quasi}.
\end{remark}

As mentioned in Remark \ref{rm2.1} that solving \eqref{2.5} and \eqref{data Neu} for Problem \ref{pro isp Dir} and solving \eqref{2.5} and \eqref{data Dir} for Problem \ref{pro isp Neu} are interesting when the given data in \eqref{1.2} and \eqref{1,3} contain noise. 
We employ the quasi-reversibility method to do so.
Let $\epsilon$ be a small positive number playing the role of the regularization parameter.
To solve \eqref{2.5} and \eqref{data Neu} we minimize the following mismatch functional
\begin{multline}
	J_1(u_1, \dots, u_N) = \sum_{m = 1}^N\int_{\Omega}  \big|Lu_m - \sum_{n = 1}^Ns_{mn} u_n\big|^2d\x 
	\\
	+\sum_{m = 1}^N \int_{\partial \Omega}  \big|\partial_\nu u_m - \int_0^T f_1(\x, t) \Psi_n(t) dt\big|^2 d\sigma 
	+ \epsilon \|\bu\|_{H^2(\Omega)^N}^2
	\label{2.9}
\end{multline}
where $\bu = (u_1, \dots, u_N)$ is in $H^2(\Omega)^N$ satisfying $\bu(\x) = 0$ for all $\x \in \partial \Omega$.
To solve \eqref{2.5} and \eqref{data Dir}, we minimize the following mismatch functional
\begin{multline}
	J_2(u_1, \dots, u_N) = \sum_{m = 1}^N\int_{\Omega} \big|Lu_m - \sum_{n = 1}^Ns_{mn} u_n\big|^2d\x 
	\\
	+\sum_{m = 1}^N \int_{\partial \Omega} \big| u_m - \int_0^T f_2(\x, t) \Psi_n(t) dt\big|^2 d\sigma 
	+ \epsilon \|\bu\|_{H^2(\Omega)^N}^2
	\label{2.99}
\end{multline}
where $\bu = (u_1, \dots, u_N)$ is in $H^2(\Omega)^N$ satisfying $\partial_\nu\bu(\x) = 0$ for all $\x \in \partial \Omega$.

In the following sections, we prove that $J_1$ has a unique minimizer and that minimizer converges to the true solution of \eqref{2.5} and \eqref{data Neu} as the noise level tends to $0$.
The results for $J_2$ can be obtained in the same manner. 
We introduce some auxiliary results in the next section.

\section{A Carleman estimate} \label{sec Car}


Let $X$ be a number in $(0, 1)$. We denote 
\[
	\widetilde \Omega := \Big\{\tilde \x = (\tilde x_1, \ldots, \tilde x_d): 0 < \tilde x_1 + X^{-2} \sum_{i = 2}^d \tilde x_i^2 < 1\Big\}.
\]
Then, there exists $\alpha \in (0, 1/2)$ such that the function 
\begin{equation}
\label{psi}
	\psi(\tilde \x) := \tilde x_1 + \frac{1}{2X^2} \sum_{i = 2}^d \tilde x_i^2 + \alpha < 1, \; \forall \, \tilde \x \in \widetilde{\Omega}.	
\end{equation}
We have the following lemma.

\begin{Lemma}
	There are two positive constants $\lambda_0$ and $\beta_0$ depending only on $\alpha$ such that for all $\lambda > \lambda_0$ and $\beta > \beta_0$, 
	we have
	\begin{equation}
		\frac{\lambda \beta}{X^2} e^{2\lambda \psi^{-\beta}}|\nabla \phi|^2 
		+ \lambda^3 \beta^4 \psi^{-2\beta - 2} e^{2\lambda \psi^{-\beta}}|\phi|^2
		\leq 
		-\frac{C\lambda \beta}{X^2} e^{2\lambda \psi^{-\beta}} \phi |\Delta \phi|^2
		+ C \psi^{\beta + 2} e^{2\lambda \psi^{-\beta}}|\Delta \phi|^2 + \Div \Phi
		\label{Lav}
	\end{equation}
	for all function $\phi \in C^2(\overline{\widetilde{\Omega}})$
	where the vector $\Phi$ satisfies
	\begin{equation}
		|\Phi| \leq C e^{2\lambda \psi^{-\beta}} 
		\Big(
			\frac{\lambda \beta}{X} |\nabla \phi|^2 + \frac{\lambda^3 \beta^3}{X^3} \psi^{-2\beta - 2}|\phi|^2
		\Big).
		\label{Phi}
	\end{equation}
\label{lemma Lavrentev}
\end{Lemma}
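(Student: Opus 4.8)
The plan is to prove \eqref{Lav} by the classical pointwise (Lavrentiev-type) Carleman technique: conjugate the Laplacian by the Carleman weight, split the conjugated operator into a self-adjoint and an anti-self-adjoint part, bound the square of their sum from below by twice their product, and collect every term that is a perfect divergence into the field $\Phi$. Concretely, I would set $\varphi := \psi^{-\beta}$, so that the weight reads $e^{2\lambda\varphi}$, and introduce $v := e^{\lambda\varphi}\phi$. A direct differentiation gives
\[
	e^{\lambda\varphi}\Delta\phi = \Delta v - 2\lambda\,\nabla\varphi\cdot\nabla v + \big(\lambda^2|\nabla\varphi|^2 - \lambda\Delta\varphi\big)v =: Sv + Av,
\]
with $Sv := \Delta v + (\lambda^2|\nabla\varphi|^2-\lambda\Delta\varphi)v$ and $Av := -2\lambda\,\nabla\varphi\cdot\nabla v$. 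Since $|Sv+Av|^2 \geq 2\,Sv\,Av$ and the left-hand side equals $e^{2\lambda\varphi}|\Delta\phi|^2$, it is enough to expand and bound the cross term $2\,Sv\,Av$ from below.

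The second step is to extract the divergences. Using the Rellich-type identity
\[
	\Delta v\,(\nabla\varphi\cdot\nabla v) = \Div\Big(\nabla v\,(\nabla\varphi\cdot\nabla v) - \tfrac12|\nabla v|^2\nabla\varphi\Big) - (\nabla v)^{\top}(D^2\varphi)\nabla v + \tfrac12|\nabla v|^2\Delta\varphi
\]
for the principal part, and $v(\nabla\varphi\cdot\nabla v)=\tfrac12\nabla\varphi\cdot\nabla(v^2)$ followed by one more integration by parts for the zeroth-order part, the cross term $2\,Sv\,Av$ reduces to a genuine divergence $\Div\Phi$ plus the quadratic form
\[
	4\lambda(\nabla v)^{\top}(D^2\varphi)\nabla v - 2\lambda\Delta\varphi\,|\nabla v|^2 + \big(4\lambda^3(\nabla\varphi)^{\top}(D^2\varphi)\nabla\varphi + \text{l.o.t.}\big)v^2 .
\]
Here I would insert the explicit derivatives of $\varphi=\psi^{-\beta}$, namely $\nabla\varphi=-\beta\psi^{-\beta-1}\nabla\psi$ and $D^2\varphi=\beta(\beta+1)\psi^{-\beta-2}\,\nabla\psi\otimes\nabla\psi-\beta\psi^{-\beta-1}D^2\psi$, and use the two structural facts about the weight from \eqref{psi}: $|\nabla\psi|^2 = 1+X^{-4}\sum_{i\ge2}\tilde x_i^2\ge 1$ and $D^2\psi=\mathrm{diag}(0,X^{-2},\dots,X^{-2})$ is positive semidefinite with trace $(d-1)X^{-2}$. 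The dominant contributions are then $4\lambda^3(\nabla\varphi)^{\top}(D^2\varphi)\nabla\varphi\sim\lambda^3\beta^4\psi^{-3\beta-4}|\nabla\psi|^4$, which yields the $\lambda^3\beta^4\psi^{-2\beta-2}|v|^2$ term of \eqref{Lav} after bounding $\psi^{-\beta-2}\ge 1$ (recall $\psi<1$), and the order-$\lambda\beta$ piece $2\lambda\beta\psi^{-\beta-1}\Delta\psi\,|\nabla v|^2\ge 2(d-1)\tfrac{\lambda\beta}{X^2}|\nabla v|^2$, which produces the gradient term.

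The main obstacle is precisely the indefiniteness of the gradient form: its leading ($\beta^2$) part $4\lambda\beta^2\psi^{-\beta-2}(\nabla\psi\cdot\nabla v)^2-2\lambda\beta^2\psi^{-\beta-2}|\nabla\psi|^2|\nabla v|^2$ is positive in the $\nabla\psi$-direction but negative transversally, so it cannot by itself control the full $|\nabla v|^2$. To absorb the negative transverse part I would retain a controlled fraction of the squares $|Sv|^2\ge0$ and $|Av|^2=4\lambda^2\beta^2\psi^{-2\beta-2}(\nabla\psi\cdot\nabla v)^2\ge0$ that were discarded when passing to $2\,Sv\,Av$, and then choose $\lambda_0,\beta_0$ (depending only on $\alpha$, through the range of $\psi$) so large that for $\lambda>\lambda_0$, $\beta>\beta_0$ the genuinely positive $\lambda^3\beta^4$ potential term and the $\lambda\beta$ gradient term dominate all indefinite lower-order remainders.

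Finally I would convert back to $\phi$ through $|\nabla v|^2=e^{2\lambda\varphi}\big(|\nabla\phi|^2+2\lambda\phi\,\nabla\varphi\cdot\nabla\phi+\lambda^2|\nabla\varphi|^2\phi^2\big)$ and $v^2=e^{2\lambda\varphi}\phi^2$; this reproduces exactly the two terms on the left of \eqref{Lav} (the extra $\lambda^2|\nabla\varphi|^2\phi^2$ contribution being of the same type as, and absorbed into, the $\lambda^3\beta^4$ potential term), while the weighted $|\Delta\phi|^2$ contributions are what remains on the right. The estimate \eqref{Phi} for $\Phi$ is then read off directly from the divergence terms assembled in the second step, each of which carries the factor $e^{2\lambda\varphi}$ together with the powers of $\lambda$, $\beta$, and $X^{-1}$ recorded there, after bounding $|\nabla\varphi|\le C\beta\psi^{-\beta-1}|\nabla\psi|\le C\beta X^{-1}\psi^{-\beta-1}$ uniformly on $\widetilde\Omega$.
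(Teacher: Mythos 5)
You are attempting a from-scratch proof of a result that the paper itself does not prove at all: the paper simply quotes \eqref{Lav}--\eqref{Phi} as a special case of \cite[Chapter 4, \S 1, Lemma 3]{Lavrentiev:AMS1986} for time-independent functions. Your conjugation set-up ($\varphi=\psi^{-\beta}$, $v=e^{\lambda\varphi}\phi$, the splitting $Sv+Av$, the Rellich identity, the formulas for $\nabla\varphi$, $D^2\varphi$, $\nabla\psi$, $D^2\psi$) is correct and is indeed the classical route. The gap is at the step you yourself flag as ``the main obstacle.'' The negative transverse part of the gradient form coming from $2\,Sv\,Av$ is
\[
-2\lambda\beta(\beta+1)\psi^{-\beta-2}|\nabla\psi|^2\,|\nabla v_\perp|^2 ,
\]
where $\nabla v_\perp$ is the component of $\nabla v$ orthogonal to $\nabla\psi$, i.e.\ it is of size $\lambda\beta^2$. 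None of the resources you propose can absorb it: $|Av|^2=4\lambda^2\beta^2\psi^{-2\beta-2}(\nabla\psi\cdot\nabla v)^2$ contains only the component of $\nabla v$ \emph{along} $\nabla\psi$; $|Sv|^2$ contains no first-order derivatives pointwise, so by itself it cannot dominate any multiple of $|\nabla v_\perp|^2$; the $\lambda^3\beta^4$ term is a potential ($v^2$) term and cannot absorb a gradient term; and the positive trace piece $2(d-1)\lambda\beta X^{-2}\psi^{-\beta-1}|\nabla v|^2$ you invoke is smaller than the negative transverse term by a factor of order $\beta$, so enlarging $\beta_0$ makes the imbalance worse, not better (moreover, for $d=2$ that trace piece combined with $-4\lambda\beta\psi^{-\beta-1}(\nabla v)^{\top}D^2\psi\,\nabla v$ is itself indefinite). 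As written, your parameter-domination argument cannot close.

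The missing idea is the weighted pairing of $Sv$ against $v$: for a weight $w\sim\lambda\beta\psi^{-\beta-2}$, Cauchy--Schwarz with the retained fraction of $|Sv|^2$ gives $\epsilon|Sv|^2+\tfrac{1}{4\epsilon}w^2v^2\geq -w\,v\,Sv$, and
\[
-w\,v\,\Delta v \;=\; w|\nabla v|^2-\Div\bigl(w\,v\nabla v\bigr)+v\,\nabla w\cdot\nabla v ,
\]
whose error terms are potentials of size $\lambda^2\beta^2$ and $\lambda^3\beta^3$ (times appropriate powers of $\psi$), hence genuinely absorbable by the $\lambda^3\beta^4\psi^{-2\beta-2}v^2$ term for large $\lambda,\beta$. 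It is only through this identity that the full gradient enters. Translated back to $\phi$, this pairing is exactly the first term on the right-hand side of \eqref{Lav}, $-C\lambda\beta X^{-2}e^{2\lambda\psi^{-\beta}}\phi\,\Delta\phi$ (the exponent in the statement, $\phi|\Delta\phi|^2$, is a typo: in the proof of Proposition \ref{prop Car} this term is manipulated as $-\lambda\beta e^{2\lambda\psi^{-\beta}}\phi\Delta\phi$ and then estimated by Cauchy--Schwarz); modulo a divergence it equals $C\lambda\beta X^{-2}e^{2\lambda\psi^{-\beta}}|\nabla\phi|^2$ plus potential terms of size $\lambda^3\beta^3$. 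Your proposal never produces or uses this term --- in effect you are trying to prove a strictly stronger inequality than \eqref{Lav}, with the one device that supplies transverse-gradient control removed. Inserting the pairing identity (equivalently, keeping the $\phi\Delta\phi$ term on the right of \eqref{Lav}) repairs the argument and turns your outline into the classical proof behind \cite[Chapter 4, \S 1, Lemma 3]{Lavrentiev:AMS1986}.
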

Lemma \ref{lemma Lavrentev} is a direct consequence of \cite[Chapter 4, \S 1, Lemma 3]{Lavrentiev:AMS1986} in which the function $\phi$ is independent of the time variable. Let $\tilde R$ be a positive number such that $\Omega \subset B(0, \tilde R)$, where $B(0,\tilde R)$ denotes a ball of the center at $0$ and the radius $\tilde R$. Let $p$ and $q$ be two positive numbers such that 
\begin{equation}
\label{TransDomain}
	p \, B(0,\tilde R) + q\, \e_1 \subset \widetilde \Omega,
\end{equation} where $\e_1$ is the unit direction vector of $x_1$ axis. For any $\x \in \Omega$, we define $\tilde \x: = p \x + q \e_1$, then \eqref{TransDomain} yields $\tilde \x \in \widetilde \Omega$. By modifying constant $C$ in Lemma \ref{lemma Lavrentev} (using $C p^2$ instead of $C$) we have that the Lemma \ref{lemma Lavrentev} holds true in domain $\Omega$. From now on, we apply Lemma \ref{lemma Lavrentev} for all function in the space $H^2({\Omega})$. The following result plays an important role in our analysis.

\begin{Proposition}[Carleman estimate]
	There exist $\lambda_0 > 1$, $\beta_0 > 1$, both of which only depend on $\ba$ and $\alpha$ such that for all $\lambda > \lambda_0$ and $\beta > \beta_0$ 
	for all $\phi \in H^2(\Omega)$, we have
	\begin{multline}
			\int_{\Omega} \psi^{\beta + 2}e^{2\lambda \psi^{-\beta}} |\Delta \phi|^2 
			 \geq \int_{\Omega} C e^{2\lambda \psi^{-\beta}} \big[\lambda\beta  |\nabla \phi|^2 + \lambda^3 \beta^4 \psi^{-2\beta - 2} |\phi|^2 \big]d\x
			 \\
			-  C\int_{\partial \Omega}e^{2\lambda \psi^{-\beta}}
			\big[
				\lambda \beta |\nabla \phi|^2 + \lambda^3 \beta^3\psi^{-2\beta - 2}|\phi|^2
			\big]d\sigma
			\label{Car est}
		\end{multline}
		where $C$ is a generic constant depending only on $\ba,$ $d$, $\Omega$, $X$ and $\alpha$.
		\label{prop Car}
\end{Proposition}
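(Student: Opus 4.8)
The plan is to integrate the pointwise Lavrentiev-type inequality \eqref{Lav} over $\Omega$ and convert the divergence term into a boundary integral via the divergence theorem. I would first establish the estimate for $\phi \in C^2(\overline{\Omega})$, where \eqref{Lav} holds pointwise and every integration by parts is classical, and then extend to all $\phi \in H^2(\Omega)$ by density of $C^2(\overline{\Omega})$ in $H^2(\Omega)$, using that both sides of \eqref{Car est} are continuous in the $H^2(\Omega)$-norm once $\lambda,\beta$ are fixed (the weights $\psi^{\pm\beta}$ and $e^{2\lambda\psi^{-\beta}}$ are bounded on $\overline{\Omega}$ because $\psi$ stays strictly between $0$ and $1$ there).

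Integrating \eqref{Lav} over $\Omega$, the left-hand side produces exactly $\int_\Omega e^{2\lambda\psi^{-\beta}}\big[\tfrac{\lambda\beta}{X^2}|\nabla\phi|^2 + \lambda^3\beta^4\psi^{-2\beta-2}|\phi|^2\big]d\x$, which, after absorbing $X^{-2}$ into the generic constant, is precisely the volume term sought on the right of \eqref{Car est}. The term $\int_\Omega \Div\Phi\, d\x$ becomes $\int_{\partial\Omega}\Phi\cdot\nu\, d\sigma$ by the divergence theorem, and bounding $\Phi\cdot\nu \le |\Phi|$ and invoking \eqref{Phi} yields exactly the boundary integral $C\int_{\partial\Omega}e^{2\lambda\psi^{-\beta}}\big[\lambda\beta|\nabla\phi|^2 + \lambda^3\beta^3\psi^{-2\beta-2}|\phi|^2\big]d\sigma$ that appears (with a minus sign) in \eqref{Car est}.

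The main obstacle is the cross term coupling $\phi$ and $\Delta\phi$ on the right of \eqref{Lav}, which must be split so that one part is absorbed into the volume term on the left and the other is controlled by the target quantity $\psi^{\beta+2}e^{2\lambda\psi^{-\beta}}|\Delta\phi|^2$. I would apply Young's inequality with a carefully chosen weight, factoring
\[
\frac{\lambda\beta}{X^2}e^{2\lambda\psi^{-\beta}}|\phi||\Delta\phi|
= \frac{\psi^{\beta/2}}{X^2\lambda^{1/2}\beta}\Big[\lambda^{3/2}\beta^2\psi^{-\beta-1}e^{\lambda\psi^{-\beta}}|\phi|\Big]\Big[\psi^{(\beta+2)/2}e^{\lambda\psi^{-\beta}}|\Delta\phi|\Big],
\]
so that Young's inequality bounds it by a small multiple (controlled by $\tfrac{\psi^{\beta/2}}{X^2\lambda^{1/2}\beta}$, which tends to $0$ as $\lambda,\beta\to\infty$ since $\psi<1$) of $\lambda^3\beta^4\psi^{-2\beta-2}e^{2\lambda\psi^{-\beta}}|\phi|^2 + \psi^{\beta+2}e^{2\lambda\psi^{-\beta}}|\Delta\phi|^2$. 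The first summand is then absorbed into the $|\phi|^2$ part of the left-hand volume term once $\lambda>\lambda_0$ and $\beta>\beta_0$ are large enough, while the second contributes an additional $C\int_\Omega\psi^{\beta+2}e^{2\lambda\psi^{-\beta}}|\Delta\phi|^2 d\x$ to the side already carrying such a term.

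Finally I would collect terms: the two $|\Delta\phi|^2$ contributions (the genuine $C\psi^{\beta+2}e^{2\lambda\psi^{-\beta}}|\Delta\phi|^2$ from \eqref{Lav} and the one generated by the cross-term splitting) combine into the left-hand side of \eqref{Car est}, the surviving volume terms pass to the right, and the boundary integral is moved over with the appropriate sign. Choosing $\lambda_0,\beta_0$ large enough to validate every absorption and renaming all multiplicative constants as the generic $C$ depending on $\ba$, $d$, $\Omega$, $X$ and $\alpha$ gives \eqref{Car est}. The delicate point throughout is the bookkeeping of the powers of $\lambda$, $\beta$ and $\psi$ in the weights, so that each auxiliary term is genuinely of lower order than the term that is meant to dominate it; this is what forces the thresholds $\lambda_0,\beta_0$ and is where I would concentrate the care.
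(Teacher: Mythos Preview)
Your proposal is correct and follows essentially the same route as the paper: integrate the pointwise Lavrentiev estimate \eqref{Lav} over $\Omega$, convert $\Div\Phi$ into a boundary integral via the divergence theorem and bound it with \eqref{Phi}, and handle the cross term $\phi\,\Delta\phi$ by a weighted Young inequality so that the resulting $|\phi|^2$ piece is absorbed into the $\lambda^3\beta^4\psi^{-2\beta-2}|\phi|^2$ volume term while the $|\Delta\phi|^2$ piece joins the left-hand side. The paper's proof uses the specific splitting $ab \le \lambda\beta\psi^{-\beta-2}a^2 + \psi^{\beta+2}b^2/(2\lambda\beta)$, producing a $\lambda^2\beta^2\psi^{-\beta-2}|\phi|^2$ term that is absorbed because $\psi<1$ and $\lambda,\beta>1$; your factorization is a cosmetic variant of the same idea, and your explicit density step from $C^2(\overline\Omega)$ to $H^2(\Omega)$ is a detail the paper leaves implicit.
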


\begin{proof}
	Let $\lambda_0$ and $\beta_0$ be as in Lemma \ref{lemma Lavrentev}.
	Fix $\lambda > \lambda_0$ and $\beta > \beta_0.$
	Using the inequality $ab \leq \lambda \beta \psi^{-\beta - 2} a^2 + \psi^{\beta + 2}b^2/(2\lambda \beta)$, for all $\phi \in C^2(\overline \Omega),$ we have
	\begin{equation}
		-\lambda \beta e^{2\lambda \psi^{-\beta}} \phi \Delta \phi
		\leq \lambda^2 \beta^2 \psi^{-\beta - 2} e^{2\lambda \psi^{-\beta}} |\phi|^2 + \frac{1}{2} \psi^{\beta + 2}e^{2\lambda \psi^{-\beta}} |\Delta \phi|^2
		\quad \mbox{in } \Omega.
		\label{3-3}
	\end{equation}
	Combining \eqref{Lav} and \eqref{3-3}, since $\psi < 1$, we have
	\begin{equation}
		\psi^{\beta + 2}e^{2\lambda \psi^{-\beta}} |\Delta \phi|^2  		
		\geq  
		C e^{2\lambda \psi^{-\beta}} \Big[\lambda\beta  |\nabla \phi|^2 + \lambda^3 \beta^4 \psi^{-2\beta - 2}|\phi|^2  -\Div(\Phi)\Big]
		\quad \mbox{in } \Omega
		\label{3-5}
	\end{equation}
	where $\Phi$ is a vector satisfying \eqref{Phi}.		Integrate \eqref{3-5} over $\Omega$ and apply the divergence theorem. Recaling \eqref{Phi}, we obtain \eqref{Car est}.		
\end{proof}

The Carleman estimate \eqref{Car est} plays a key role for us to estimate the error of the solution to the inverse problem assuming that the given data contains noise.

\section{The quasi-reversibility method} \label{sec quasi}

As mentioned in section \ref{sec med}, we only prove the convergence of the quasi-reversibility method to solve \eqref{2.5} with Cauchy boundary data \eqref{data Neu}.
In this case, the objective functional $J_1$, now named as $J$, see \eqref{2.9}, is written as
 \begin{multline}
	J(u_1, \dots, u_N) = \sum_{m = 1}^N\int_{\Omega} |Lu_m - \sum_{n = 1}^Ns_{mn} u_n|^2d\x 
	\\
	+\sum_{m = 1}^N \int_{\partial \Omega}  \Big| \partial_{\nu} u_m - \int_0^T f_1(\x, t) \Psi_m(t) dt\Big|^2 d\sigma 
	+ \epsilon \sum_{m = 1}^N \|u_m\|^2_{H^2(\Omega)}
	\label{3.2}
\end{multline}
subject to the constraint $u_1 = \ldots = u_N  = 0$ on $\partial \Omega$. Let
\[
	\H = \{\f \in H^2(\Omega)^N: \f|_{\partial \Omega} = 0\}
\]
be a closed  subspace of $H^2(\Omega)^N$.
It is clear that $J$ is strictly convex in $\H$. 
We now prove that $J$ is has a unique minimizer in $\H$.

\begin{Proposition}
	The functional $J$ has a unique minimizer in $\H$.
	\label{prop minimizer}
\end{Proposition}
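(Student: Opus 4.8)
The plan is to recognize $J$ as a quadratic functional on the Hilbert space $\H$ and to characterize its minimizer through the associated variational equation, which can then be solved by the Riesz representation theorem. First I would expand $J(\bu)$ into its quadratic, linear, and constant parts in $\bu = (u_1, \dots, u_N)$. Computing the Fr\'echet derivative $DJ(\bu)$ and imposing $DJ(\bu)[\bv] = 0$ for all test vectors $\bv \in \H$, the minimization problem becomes equivalent to finding $\bu \in \H$ such that $a(\bu, \bv) = F(\bv)$ for all $\bv \in \H$, where $a$ is the symmetric bilinear form
\begin{multline*}
	a(\bu, \bv) = \sum_{m = 1}^N \int_{\Omega} \Big(Lu_m - \sum_{n=1}^N s_{mn} u_n\Big)\Big(Lv_m - \sum_{n=1}^N s_{mn} v_n\Big) d\x \\
	+ \sum_{m=1}^N \int_{\partial \Omega} \partial_\nu u_m \, \partial_\nu v_m \, d\sigma + \epsilon \sum_{m=1}^N \langle u_m, v_m \rangle_{H^2(\Omega)},
\end{multline*}
and $F$ is the linear functional $F(\bv) = \sum_{m=1}^N \int_{\partial \Omega} \big(\int_0^T f_1 \Psi_m \, dt\big)\, \partial_\nu v_m \, d\sigma$.

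Next I would verify the two hypotheses of the Lax--Milgram lemma, which, since $a$ is symmetric, reduce to those of the Riesz representation theorem. For continuity, the volume term is bounded by $C\|\bu\|_{H^2(\Omega)^N}\|\bv\|_{H^2(\Omega)^N}$ via Cauchy--Schwarz together with the $L^\infty$-bounds on $\bb$, $\bc$, $\ba$ and $a$ that define $L$ and the coefficients $s_{mn}$; the regularization term is trivially bounded. For coercivity, all three contributions to $a(\bu, \bu)$ are nonnegative, and the last one alone gives $a(\bu, \bu) \geq \epsilon \|\bu\|_{H^2(\Omega)^N}^2$, so $a$ is coercive with constant $\epsilon > 0$. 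The continuity of $F$ is checked the same way.

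The step I expect to be the main obstacle is the boundary integral $\int_{\partial \Omega} \partial_\nu u_m \, \partial_\nu v_m \, d\sigma$: one must confirm it is well-defined and continuous on $\H$. This is precisely where the choice of $H^2(\Omega)$ as the underlying space is essential. By the trace theorem, a function $u_m \in H^2(\Omega)$ has $\partial_\nu u_m \in H^{1/2}(\partial \Omega) \hookrightarrow L^2(\partial \Omega)$ with $\|\partial_\nu u_m\|_{L^2(\partial \Omega)} \leq C \|u_m\|_{H^2(\Omega)}$, so both the boundary form and the functional $F$ are continuous on $\H$; in a space with less regularity the normal trace would not be square integrable and these terms would be meaningless. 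Once continuity and coercivity are in hand, Lax--Milgram produces a unique $\bu \in \H$ solving the variational equation. Since $a$ is coercive, hence positive definite, the quadratic functional $J$ is strictly convex, and this $\bu$ is therefore its unique minimizer in $\H$, which completes the proof.
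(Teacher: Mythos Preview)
Your proof is correct, but it takes a genuinely different route from the paper. The paper argues by the direct method of the calculus of variations: it takes a minimizing sequence, uses the regularization term $\epsilon\|\bu\|_{H^2(\Omega)^N}^2$ to show the sequence is bounded in $\H$, extracts a weakly convergent subsequence, and then checks weak lower semicontinuity of each piece of $J$ (the boundary term via the compact embedding $H^{1/2}(\partial\Omega)\hookrightarrow L^2(\partial\Omega)$, the volume term via weak continuity of the affine map $\bu\mapsto (Lu_m-\sum_n s_{mn}u_n)_m$ into $L^2(\Omega)^N$ together with weak lower semicontinuity of the norm). Uniqueness is then deduced separately from strict convexity.

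Your approach instead exploits that $J$ is exactly quadratic: you write the Euler--Lagrange equation as $a(\bu,\bv)=F(\bv)$ and apply Lax--Milgram, with coercivity coming for free from the single inequality $a(\bu,\bu)\ge \epsilon\|\bu\|_{H^2(\Omega)^N}^2$. This is shorter and yields existence and uniqueness simultaneously, and it avoids any compactness or semicontinuity argument. The paper's direct-method proof, on the other hand, would survive if $J$ were merely convex and weakly lower semicontinuous rather than quadratic, so it is in principle more flexible; but for the specific functional at hand your argument is the cleaner one. Both proofs rely on the same trace-theoretic fact you singled out, namely that $\partial_\nu u_m\in H^{1/2}(\partial\Omega)\subset L^2(\partial\Omega)$ with continuous dependence on $\|u_m\|_{H^2(\Omega)}$, which is what makes the boundary term in $J$ (and in your $a$ and $F$) well defined.
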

\begin{proof}
	Let 
	\[
		e = \inf \big\{J(\bu): \bu = (u_1, \dots, u_n) \in \H\big\} \, \geq 0
	\]
	and $\{\bu_{\frak{i}}\}_{\frak{i} \geq 1}$ be a sequence satisfying
	\[
		\lim_{\frak{i} \to \infty} J(\bu_\frak{i}) = e.
	\]
	Then $\{\bu_\frak{i}\}_{\frak{i} \geq 1}$ is bounded in $\H$.
	In fact, by contradiction, assume that $\{\bu_\frak{i}\}_{\frak{i} \geq 1}$ is unbounded. Then, there exists a subsequence, still named as $\{\bu_\frak{i}\}_{\frak{i} \geq 1}$, satisfying $\lim_{\frak{i} \to \infty}\|\bu_\frak{i}\|_{H^2(\Omega)^N} = \infty.$ Hence, 
	\[
		e = \lim_{\frak i \to \infty}J(u_\frak i) \geq \lim_{\frak i \to \infty} \epsilon  \|\frak u_{\frak{i}}\|^2_{H^2(\Omega)^N} = \infty,
	\]
	which is impossible.	Due to the boundedness of $\{\bu_\frak{i}\}_{\frak{i} \geq 1}$ in $\H$, there exists a subsequence of  $\{\bu_\frak{i}\}_{\frak{i} \geq 1}$, still named as $\{\bu_\frak{i}\}_{\frak{i} \geq 1}$, which weakly converges to a function $\bu_0$ in $\H$. That implies $ \{\partial_\nu \bu_{\frak{i}}\}_{\frak{i} \geq 1}$ weakly converges to $\partial_{\nu} \bu_0$ in $H^{1/2}(\partial \Omega)^N$, and therefore, $\{\partial_\nu \bu_{\frak{i}}\}_{\frak{i} \geq 1}$ strongly converges to $\partial_{\nu} \bu_0$ in $L^2(\partial \Omega)$ by the compact imbedding of $H^{1/2}(\partial \Omega)$ into $L^2(\partial \Omega)$. Furthermore, the fact that $\{\bu_\frak{i}\}_{\frak{i} \geq 1}$ weakly converges  $\bu_0$ in $\H$ implies  $\Big(L(\bu_\frak{i})_m - \sum_{n = 1}^N s_{mn}(\bu_i)_n\Big)_{m = 1}^N$ weakly converges to $\Big(L(\bu_0)_m - \sum_{n = 1}^N s_{mn} (\bu_0)_n\Big)_{m = 1}^N$ in $L^2(\Omega)^N$.
	As a result,
	\[
		\sum_{m = 1}^N\int_{\Omega} \Big|L(\bu_0)_m - \sum_{n = 1}^N s_{mn} (\bu_0)_n\Big|^2d\x 
		\leq \limsup_{i \to \infty} \sum_{m = 1}^N\int_{\Omega}\Big|L(\bu_i)_m - \sum_{n = 1}^N s_{mn} (\bu_i)_n\Big|^2d\x.
	\]
	Therefore
	\[
		J(\bu_0) \leq \limsup_{\frak{i} \to \infty} J(\bu_\frak{i}) = e.
	\]
	Thus $\bu_0$ is a minimizer of $J.$
	The uniqueness of $\bu_0$ is deduced from the strict convexity of $J$.
\end{proof}

\begin{Definition}
	The unique minimizer, denoted by \, $\bu^{\rm min} = (u_1^{\min}, \dots, u_N^{\min})$, of functional $J$ is said to be the regularized solution of the problem \eqref{2.5} -- \eqref{data Neu}.
\end{Definition}

We now assume that the measured data contain noise, with a noise level $\delta$. We next study the convergence of $\bu^{\min}$ as noise level $\delta$ tends to 0. Let denote $f_1^{\delta}(\x, t)$ the noisy data and $f_1^*(\x,t)$ the corresponding noiseless data, $(\x, t) \in \partial\Omega \times [0,T]$.
By noise level $\delta$, we mean
 \[
 	\left(\int_0^T\int_{\partial \Omega} |f_1^{\delta}(\x, t) - f_1^*(\x, t)|^2d\sigma dt \right)^{1/2}< \delta.
 \]
 Since the truncation number $N$ is a finite number, we can write
\begin{equation}
 	\sum_{m = 1}^N\int_{\partial \Omega}\Big|\int_0^T f_1^{\delta}(\x, t) \Psi_m(t) dt - \int_0^T f_1^*(\x, t) \Psi_m(t) dt\Big|^2 \leq C \delta^{2},
	\label{error}
 \end{equation}
 where $C$ is a generic constant depending only on $N, \ba, \bb$ and $c$. For each $m \in \{1, \dots, N\}$, define
\begin{equation}
	\frak{f}^*_m(\x) = \int_0^T f_1^*(\x, t) \Psi_m(t) dt \quad \mbox{and} \quad \frak{f}^\delta_m(\x) = \int_0^T f_1^{\delta}(\x, t) \Psi_m(t) dt,
	\label{frak}
\end{equation}
for all $\x \in \partial \Omega.$ The following theorem guarantees the Lipschitz stability of the reconstructed method with respect to noise.
 \begin{Theorem}
 	Let $\bu_{\min}^\delta \in \H$ be the minimizer of the functional 
	 \begin{equation}
	J_{\delta}(u_1, \dots, u_N) = \sum_{m = 1}^N\Big[\int_{\Omega} |Lu_m - \sum_{n = 1}^Ns_{mn} u_n|^2d\x 
	+\int_{\partial \Omega}  |\partial_\nu u_{m} - \frak{f}^{\delta}_m|^2d\sigma\Big]
	+ \epsilon \sum_{m = 1}^N\|u_m\|^2_{H^2(\Omega)}.
	\label{4.1}
\end{equation}
Assume that the system
\begin{equation}
	\left\{
		\begin{array}{rcll}
			L u_m -\ds \sum_{n = 1}^N s_{mn}u_n &=& 0 &\mbox{in } \Omega,\\
			\partial_\nu u_m  &=& \frak{f}^*_m &\mbox{on } \partial \Omega,\\
			u_m &=& 0 &\mbox{on } \partial \Omega
		\end{array}
	\right. 
	\quad m \in \{1, \dots, N\}
	\label{4.2}
\end{equation}
has the unique solution $\bu^* = (u_1^*, \dots, u_N^*) \in \H.$
Then, 
\begin{equation}
	\|\bu_{\min}^\delta - \bu^*\|_{H^1(\Omega)^N}^2 \leq C \big(\delta^2 + \epsilon \|\bu^*\|_{H^2(\Omega)^N}^2\big)
	\label{4.6}
\end{equation}
where $C$ is a generic constant depending only on $N, \ba, \bb$ and $c$.
As a result, let $u^{\rm comp}$ and $u^*$ be the functions obtained by \eqref{2.2222} with $(u_1, \dots, u_N)$ replaced by $\bu^{\delta}_{\rm min}$ and $\bu^*$ respectively and
let $p^{\rm comp}(\x)$ and $p^{*}(\x)$ be $u^{\rm comp}(\x, 0)$ and $u^*(\x, 0)$ respectively, $\x \in \Omega$. 
We have
\begin{equation}
	\|p^{\rm comp} - p^*\|_{H^1(\Omega)}^2 \leq C \big(\delta^2 + \epsilon \|\bu^*\|_{H^2(\Omega)^N}^2\big).
	\label{4.7}
\end{equation}
\label{thm 4.1}
 \end{Theorem}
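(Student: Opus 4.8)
The plan is to combine the first-order optimality condition for $\bu_{\min}^\delta$ with the Carleman estimate of Proposition \ref{prop Car} applied to each Fourier component. Write $\bu_{\min}^\delta = (u_1^\delta,\dots,u_N^\delta)$ and $\mathbf{w} = \bu_{\min}^\delta - \bu^* = (w_1,\dots,w_N)$; since both $\bu_{\min}^\delta$ and $\bu^*$ lie in $\H$, we have $w_m = 0$ on $\partial\Omega$ for every $m$, which will be decisive later. Because $J_\delta$ in \eqref{4.1} is quadratic and differentiable on $\H$, the Euler--Lagrange condition, namely that its first variation at $\bu_{\min}^\delta$ vanishes in every direction $\bh\in\H$, reads
\begin{equation*}
\sum_{m=1}^N \int_\Omega \Big(Lu^\delta_m - \sum_{n}s_{mn}u^\delta_n\Big)\Big(Lh_m - \sum_n s_{mn}h_n\Big)d\x + \sum_{m=1}^N\int_{\partial\Omega}\big(\partial_\nu u^\delta_m - \frak{f}^\delta_m\big)\partial_\nu h_m\, d\sigma + \epsilon\langle \bu_{\min}^\delta,\bh\rangle_{H^2} = 0 .
\end{equation*}
Since $\bu^*$ solves \eqref{4.2} exactly, i.e. $Lu_m^* - \sum_n s_{mn}u_n^* = 0$ and $\partial_\nu u_m^* = \frak{f}_m^*$, subtracting the corresponding relation turns the volume and boundary residuals of $\bu^\delta_m$ into those of $w_m$.

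Next I would choose $\bh = \mathbf{w}$ in that identity, which gives the energy relation
\begin{equation*}
A + B + \epsilon\|\mathbf{w}\|^2_{H^2(\Omega)^N} = -\sum_{m=1}^N\int_{\partial\Omega}(\frak{f}^*_m - \frak{f}^\delta_m)\,\partial_\nu w_m\,d\sigma - \epsilon\langle\bu^*,\mathbf{w}\rangle_{H^2(\Omega)^N},
\end{equation*}
where $A = \sum_m\int_\Omega|Lw_m - \sum_n s_{mn}w_n|^2 d\x$ and $B = \sum_m\int_{\partial\Omega}|\partial_\nu w_m|^2 d\sigma$. Applying Cauchy--Schwarz and Young's inequality to the right-hand side, using the data bound \eqref{error} to control $\sum_m\|\frak{f}^*_m - \frak{f}^\delta_m\|^2_{L^2(\partial\Omega)}\le C\delta^2$, and absorbing half of $B$ and half of $\epsilon\|\mathbf{w}\|^2_{H^2}$ into the left side, I obtain the preliminary estimate $A + B + \epsilon\|\mathbf{w}\|^2_{H^2(\Omega)^N} \leq C\big(\delta^2 + \epsilon\|\bu^*\|^2_{H^2(\Omega)^N}\big)$.

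The heart of the argument, and the step I expect to be the main obstacle, is to upgrade control of the residuals $A$ and $B$ into control of $\|\mathbf{w}\|_{H^1}$ through the Carleman estimate \eqref{Car est} with $\phi = w_m$. The key identity is $\Delta w_m = \big(Lw_m - \sum_n s_{mn}w_n\big) + \sum_n s_{mn}w_n - \bb\cdot\nabla w_m - \bc w_m$, so that after summing over $m$ the left-hand side $\sum_m\int_\Omega\psi^{\beta+2}e^{2\lambda\psi^{-\beta}}|\Delta w_m|^2$ is bounded by $C\,A$ plus lower-order weighted volume terms in $|\nabla w_m|^2$ and $\sum_n|w_n|^2$. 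These lower-order terms must be absorbed into the dominant Carleman contributions $C\lambda\beta|\nabla w_m|^2$ and $C\lambda^3\beta^4\psi^{-2\beta-2}|w_m|^2$ on the right of \eqref{Car est}; since $\psi<1$ forces $\psi^{-2\beta-2}\ge 1$ and $N$ is finite, this succeeds once $\lambda$ and $\beta$ are fixed large enough depending only on $\|\bb\|_\infty$, $\|\bc\|_\infty$ and the finitely many $s_{mn}$. For the boundary integral generated by \eqref{Car est}, I would exploit $w_m = 0$ on $\partial\Omega$: the $|w_m|^2$ contribution vanishes, the tangential gradient vanishes so $|\nabla w_m| = |\partial_\nu w_m|$ on $\partial\Omega$, and hence that term is bounded by $C\,B$.

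With $\lambda,\beta$ now fixed, every weight $e^{2\lambda\psi^{-\beta}}$ and $\psi^{\pm(\cdots)}e^{2\lambda\psi^{-\beta}}$ is bounded above and below by positive constants on $\overline\Omega$, so the absorbed inequality collapses to $\|\mathbf{w}\|^2_{H^1(\Omega)^N} \leq C(A+B)$. Combining this with the preliminary estimate yields \eqref{4.6}. Finally, since $p^{\rm comp}(\x) - p^*(\x) = \sum_{n=1}^N w_n(\x)\,\Psi_n(0)$, a Cauchy--Schwarz bound over this finite sum gives $\|p^{\rm comp} - p^*\|^2_{H^1(\Omega)} \leq C\|\mathbf{w}\|^2_{H^1(\Omega)^N}$, which together with \eqref{4.6} establishes \eqref{4.7}, with $C$ depending only on $N$, $\ba$, $\bb$ and $\bc$.
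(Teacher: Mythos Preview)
Your proposal is correct and follows essentially the same route as the paper's proof: derive the variational identity for the minimizer, subtract the exact relation satisfied by $\bu^*$, test with $\bh=\mathbf{w}$, use Young's inequality together with \eqref{error} to obtain the preliminary bound on the residual $A$ and the Neumann trace $B$, then feed each component into the Carleman estimate \eqref{Car est}, absorb the lower-order terms by choosing the Carleman parameters large, and use $w_m|_{\partial\Omega}=0$ to reduce the boundary contribution to $|\partial_\nu w_m|^2$. The only cosmetic difference is that the paper fixes $\beta=\beta_0$ and enlarges $\lambda$ alone, whereas you allow both to grow; either works.
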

\begin{proof}
	Due to \eqref{2.2222}, we have
	\[
		p^{\rm comp}(\x) = \sum_{n  = 1}^N u_n^{\rm comp}(\x)\Psi_n(0) 
		\quad \mbox{and} \quad
		p^*(\x) = \sum_{n  = 1}^N u_n^*(\x)\Psi_n(0).
	\] $\x \in \Omega$.
	Hence, \eqref{4.6} implies \eqref{4.7}.
	It is sufficient to prove \eqref{4.6}.
	Since $\bu_{\min}^{\delta} = (u_1, \dots, u_N)$ is the minimizer of $J_\delta$, for all $\bh = (h_1, \dots, h_N) \in \H,$
	\begin{multline}
		\sum_{m = 1}^N\Big\langle L u_m - \sum_{n = 1}^Ns_{mn} u_n,  L h_m - \sum_{n = 1}^N s_{mn} h_n\Big\rangle_{L^2(\Omega)}
		\\
		+ \sum_{m = 1}^N \Big\langle \partial_{\nu }u_m - \frak{f}^\delta_m,  \partial_{\nu }h_m\Big\rangle_{L^2(\partial \Omega)}
		+ \epsilon \sum_{m = 1}^N \langle u_m, h_m\rangle_{H^2(\Omega)}
		= 0.
		\label{4.4}
	\end{multline}
	Since $\bu^* = (u_1^*, \dots, u_N^*)$ is the true solution to \eqref{4.2}, for all $\bh = (h_1, \dots, h_N) \in \H,$
	\begin{multline}
		\sum_{m = 1}^N\Big\langle L u_m^* - \sum_{n = 1}^Ns_{mn} u_n^*,  L h_m - \sum_{n = 1}^N s_{mn} h_n\Big\rangle_{L^2(\Omega)}
		\\
		+ \sum_{m = 1}^N \Big\langle \partial_\nu u_m^* - \frak{f}^*_m,  \partial_\nu h_m\Big\rangle_{L^2(\partial \Omega)} 
		+ \epsilon \sum_{m = 1}^N \langle u_m^*, h_m\rangle_{H^2(\Omega)}
		= \epsilon \sum_{m = 1}^N \langle u_m^*, h_m\rangle_{H^2(\Omega)}.
		\label{4.5}
	\end{multline}
	Hence, by subtracting \eqref{4.4} from \eqref{4.5}  and setting $\bh = (h_1, \dots, h_N) = \bu^\delta_{\min} - u^*$, we have
	\begin{multline*}
		\sum_{m = 1}^N\Big \|L h_m - \sum_{n = 1}^N s_{mn} h_n\Big\|_{L^2(\Omega)}^2
		+ \sum_{m = 1}^N \langle \partial_\nu h_m - (\frak{f}^\delta_m - \frak{f}^*_m), \partial_\nu h_m\rangle_{L^2(\partial \Omega)}
		\\
		+ \epsilon \sum_{m = 1}^N \|h_m\|_{H^2(\Omega)}^2
		= -\epsilon \sum_{m = 1}^N \langle u_m^*, h_m\rangle_{H^2(\Omega)}.
	\end{multline*}	
	Equivalently,
	\begin{multline*}
		\sum_{m = 1}^N\Big \|\Delta h_m + \bb \cdot \nabla h_m + c h_m - \sum_{n = 1}^N s_{mn} h_n\Big\|_{L^2(\Omega)}^2
		\\
		+ \sum_{m = 1}^N \|\partial h_m\|_{L^2(\partial \Omega)}^2
		+ \epsilon \sum_{m = 1}^N \|h_m\|_{H^2(\Omega)}^2		
		= \sum_{m = 1}^N \langle  \frak{f}^\delta_m - \frak{f}^*_m, \partial_\nu h_m\rangle_{L^2(\partial \Omega)} - \epsilon \sum_{m = 1}^N \langle u_m^*, h_m\rangle_{H^2(\Omega)}. 
	\end{multline*}	
	Using the inequality $|ab| \leq \frac{a^2}{2} + \frac{b^2}{2}$, we have
	\begin{multline}
		\sum_{m = 1}^N\Big \|\Delta h_m + \bb \cdot \nabla h_m + c h_m - \sum_{n = 1}^N s_{mn} h_n\Big\|_{L^2(\Omega)}^2
		+\frac{1}{2} \sum_{m = 1}^N \|\partial_\nu h_m\|_{L^2(\partial \Omega)}^2
		\\
		+ \frac{\epsilon}{2} \sum_{m = 1}^N \|h_m\|_{H^2(\Omega)}^2
		\leq
		\frac{1}{2} \sum_{m = 1}^N \|\frak{f}^\delta_m - \frak{f}^*_m\|^2_{L^2(\partial \Omega)} 
		+ \frac{\epsilon}{2}\sum_{m = 1}^N \|u_m^*\|_{H^2(\Omega)}^2.
		\label{4.8}
	\end{multline}	
	It follows from \eqref{error}, \eqref{frak} and \eqref{4.8} that
	\begin{equation}
		\sum_{m = 1}^N\int_{\Omega}\Big|\Delta h_m + \bb \cdot \nabla h_m + c h_m - \sum_{n = 1}^N s_{mn} h_n\Big|^2 d\x
		+ \sum_{m = 1}^N \| \partial_{\nu }h_m\|_{L^2(\partial \Omega)}^2
		\leq C \big(\delta^2 + \epsilon \|\bu^*\|_{H^2(\Omega)^N}^2\big)
		\label{4.10}
	\end{equation}
	for a constant $C > 0.$
	It follows from \eqref{4.10} that
	\begin{equation}
		 \sum_{m = 1}^N \|\partial_{\nu} h_m\|_{L^2(\partial \Omega)}^2
		\leq C \big(\delta^2 + \epsilon \|\bu^*\|_{H^2(\Omega)^N}^2\big).
		\label{4.1111}
	\end{equation}
	Since $h_m = 0$ on $\partial \Omega$, $1 \leq m \leq N$, the tangent derivative of $h_m$ on $\partial \Omega$ is $0$. 
	Hence, by \eqref{4.1111}
	\begin{equation}
		 \sum_{m = 1}^N \|\nabla h_m\|_{L^2(\partial \Omega)}^2
		\leq C \big(\delta^2 + \epsilon \|\bu^*\|_{H^2(\Omega)^N}^2\big).
		\label{4.11}
	\end{equation}
	Recall $\lambda_0$, $\beta_0$ as in Lemma \ref{lemma Lavrentev} and the function $\psi$ as in \eqref{psi}.
	Fix $\beta  = \beta_0$.
	Applying the inequality $(a - b)^2 \geq a^2/2 - b^2$, we have for all $\lambda > \lambda_0$
	\begin{align*}
		\sum_{m = 1}^N&\int_{\Omega}\Big|\Delta h_m + \bb \cdot \nabla h_m + c h_m - \sum_{n = 1}^N s_{mn} h_n\Big|^2 d\x
		\\
		&\geq \min_{\x \in \overline \Omega}\big\{e^{-2\lambda \psi^{-\beta}}\psi^{-\beta - 2}\big\}
		\sum_{m = 1}^N\int_{\Omega} e^{2\lambda \psi^{-\beta}}\psi^{\beta + 2}\Big|\Delta h_m + \bb \cdot \nabla h_m + c h_m - \sum_{n = 1}^N s_{mn} h_n\Big|^2 d\x
		\\
		&\geq  \min_{\x \in \overline \Omega}\big\{e^{-2\lambda \psi^{-\beta}}\psi^{-\beta - 2}\big\}
		\Big[\frac{1}{2}\sum_{m = 1}^N\int_{\Omega} e^{2\lambda \psi^{-\beta}}\psi^{\beta + 2}|\Delta h_m|^2 d\x
		\\
		&\hspace{5cm}- \sum_{m = 1}^N \int_{\Omega} e^{2\lambda \psi^{-\beta}}\psi^{\beta + 2}\Big|\bb \cdot \nabla h_m + c h_m - \sum_{n = 1}^N s_{mn} h_n\Big|^2 d\x\Big].
	\end{align*}	
	Thus, by \eqref{4.10},
	\begin{multline*}
		\min_{\x \in \overline \Omega}\big\{e^{-2\lambda \psi^{-\beta}}\psi^{-\beta - 2}\big\}
		\Big[
		\sum_{m = 1}^N\int_{\Omega} e^{2\lambda \psi^{-\beta}}\psi^{\beta + 2}|\Delta h_m|^2 d\x 
		\\
		- 2\sum_{m = 1}^N \int_{\Omega} e^{2\lambda \psi^{-\beta}}\psi^{\beta + 2}\Big|\bb \cdot \nabla h_m + c h_m - \sum_{n = 1}^N s_{mn} h_n\Big|^2 d\x
		\Big]
		\leq C\big(\delta^2 + \epsilon \|\bu^*\|_{H^2(\Omega)^N}^2\big).
	\end{multline*}
	Applying the Carleman estimate \eqref{Car est}, we have
	\begin{multline*}
		\min_{\x \in \overline \Omega}\big\{e^{-2\lambda \psi^{-\beta}}\psi^{-\beta - 2}\big\}
		\Big[
		C\sum_{m = 1}^N\int_{\Omega} e^{2\lambda \psi^{-\beta}}[\lambda \beta |\nabla h_m|^2 + \lambda^3 \beta^4 \psi^{-2\beta - 2} |h_m|^2] d\x 
		\\
		- C\int_{\partial \Omega}  e^{2\lambda \psi^{-\beta}} \big[
				\lambda \beta |\nabla h_m|^2 + \lambda^3 \beta^3\psi^{-2\beta - 2}|h_m|^2
			\big]d]\sigma
		\\
		- 2\sum_{m = 1}^N \int_{\Omega} e^{2\lambda \psi^{-\beta}}\psi^{\beta + 2}\Big|\bb \cdot \nabla h_m + c h_m - \sum_{n = 1}^N s_{mn} h_n\Big|^2 d\x
		\Big]
		\leq C\big(\delta^2 + \epsilon \|\bu^*\|_{H^2(\Omega)^N}^2\big).
	\end{multline*}
 Since $\beta = \beta_0$ fixed, 	choosing $\lambda$ sufficiently large,we have
	\begin{equation}
\sum_{m = 1}^N \int_{\Omega} \big[\lambda  |\nabla h_m|^2 + \lambda^3|h_m|^2\big]d\x
		\leq 
		C\int_{\partial \Omega} |\lambda |\nabla h_m|^2 + \lambda^3 |h_m|^2d\sigma
		+  C\big(\delta^2 + \epsilon \|\bu^*\|_{H^2(\Omega)^N}^2\big)
		\label{4.14}
	\end{equation}
	Since ${\bf h} = \bu^\delta - \bu^* \in \H$, $h_m = 0$ on $\partial \Omega.$ Hence, we obtain \eqref{4.6} by using \eqref{4.11} and \eqref{4.14}.
\end{proof}

\begin{remark}
	The conclusion of Theorem \ref{thm 4.1} is similar to some theorems about the quasi-reversibility method we have developed, see e.g. \cite[Theorem 5.1]{NguyenLiKlibanov:IPI2019} and \cite[Theorem 4.1]{LiNguyen:IPSE2019}.
	The main difference is that in Theorem \ref{thm 4.1}, we relax a technical condition that there exists an error vector valued function $\mathcal E$, well-defined in the whole $\Omega$, such that  $\partial_\nu \mathcal E = \frak f^\delta - \frak f^*$ and $\|\mathcal E\|_{H^2(\Omega)} = O(\delta).$  
\end{remark}

The analysis for the quasi-reversibility method to solve \eqref{2.5} and \eqref{data Dir} is similar to the arguments above. 
We do not repeat the proof here.

\section{Numerical studies}  \label{sec num}

In this section, we set the dimension $d = 2$ and $\Omega = (-R, R)^2$ with $R = 1$.
Define a grid of points on $\overline \Omega$ as
\[
	\mathcal G = \{(x_i = -R + (i - 1)h_\x, \; y_j = -R + (j - 1)h_\x): 1 \leq i, j \leq N_\x\}
\] where $h_\x = 2R/(N_\x - 1)$ with $N_\x = 81.$
We set $T = 2$. On $[0, T]$, we also define the uniform partition
\[
	\mathcal T = \{t_i = (i - 1)h_t: 1 \leq i \leq N_T\}
\]
 where $h_t = T/(N_T - 1)$. In our computation, $N_T = 201$.
To generate the simulated data, we solve \eqref{1.1} with
\[
	\left\{
		\begin{array}{rcl}
			\ba(x, y) &=& 1 + \sin^2(x^2 + y^2),\\	
			\bb(x, y) &=& (2, 1),\\
			c(x, y) &=& \cos(x^2 + y^2),\\
			e(x, y) &=& 0.5[\cos(x^2 + y^2) + \sin(x^2 + y^2)]
		\end{array}
	\right. \quad (x, y) \in \Omega
\]
by the finite difference method in the implicit scheme. 
Let $u^*(\x, t)$, $\x \in \mathcal G$ and $t \in \mathcal T$, be the obtained numerical solution. 
We can extract the data $\mathcal B u^*(\x, t)$ and $\mathcal Fu^*(\x, t)$ on $(\partial \Omega \times [0, T]) \cap (\mathcal G \times \mathcal T)$. These functions serve as the data without noise.
For $\delta > 0$, the noisy data are given by
\[
	\mathcal B u(\x, t) = \mathcal B u^*(\x, t)(1 + \delta {\rm rand}(\x, t)) \quad \mbox{and } \quad
	\mathcal F u(\x, t) = \mathcal F u^*(\x, t)(1 + \delta {\rm rand}(\x, t))
\]
where ${\rm rand}$ is the function taking  uniformly distributed random numbers in $[-1, 1].$

\subsection{Implementation}
\label{sec 5.1}

We present in details the implementation of Steps \ref{step basis} and \ref{step 1} of Algorithm \ref{alg 1} to solve Problem \ref{pro isp Dir} while the other steps can be implemented directly.
The implementation for Problem  \ref{pro isp Neu} can be done in the same manner.

{\it Step \ref{step basis} in Algorithm \ref{alg 1}.} In our numerical studies, we employ the basis $\{\Psi_n\}_{n \geq 1}$ that was first introduced by Klibanov in \cite{Klibanov:jiip2017}.
For each $n \geq 1,$ we define $\Phi_n(t) := t^{n - 1}e^{t - T/2}.$
The set $\{\Phi_n: n \geq 1\}$ is complete in $L^2(0, T)$.
We apply the Gram-Schmidt orthonormalization process on this set to obtain the orthonormal basis $\{\Psi_n\}_{n \geq 1}$ of $L^2(0, T).$
\begin{remark}
The basis $\{\Psi_n\}_{n \geq 1}$ was successfully used very often in our research group to solve a long list of inverse problems including the nonlinear coefficient inverse problems for elliptic equations \cite{VoKlibanovNguyen:IP2020} and parabolic equations \cite{Nguyen:arxiv2019, KlibanovNguyen:ip2019, KhoaKlibanovLoc:SIAMImaging2020, Khoaelal:IPSE2020}, and ill-posed inverse source problems for elliptic equations \cite{NguyenLiKlibanov:IPI2019} and parabolic equations \cite{LiNguyen:IPSE2019}, transport equations \cite{KlibanovLeNguyen:SIAM2020} and full transfer equations \cite{KlibanovAlexeyNguyen:SISC2019}. 
Another reason for us to employ this basis rather than the well-known basis of the Fourier series is that the first elements of this basis is a constant.
Hence, when we plug \eqref{2.2} into \eqref{1.1}, the information of $u_1(\x) \Psi_1''(t)$ will be lost. As a result, the contribution of $u_1(\x)$ in \eqref{2.5} is less than the that of the corresponding $u_1(\x)$ obtained by the basis $\{\Psi_n\}_{n \geq 1}$.
\label{basis Psi}
\end{remark}

To choose $N$, we numerically compare $u^*(\x, 0)$ and $\ds\sum_{n = 1}^N u^*_n(\x)\Psi_n(0)$ for
$
	\x \in \Omega
$ where $u^*$ is the true solution to \eqref{1.1} and the source $p(\x)$ is given in Example 1 below. The number $N$ is chosen such that the error \[\Big|u^*(\x, 0) - \ds\sum_{n = 1}^N u^*_n(\x)\Psi_n(0)\Big|\] is small enough. 
We perform this procedure and choose $N = 35$, see Figure \ref{fig choose N}. This cut off number is used for all numerical examples in the paper.

\begin{figure}[h!]
\begin{center}
	\subfloat[$N = 15$]{\includegraphics[width=.3\textwidth]{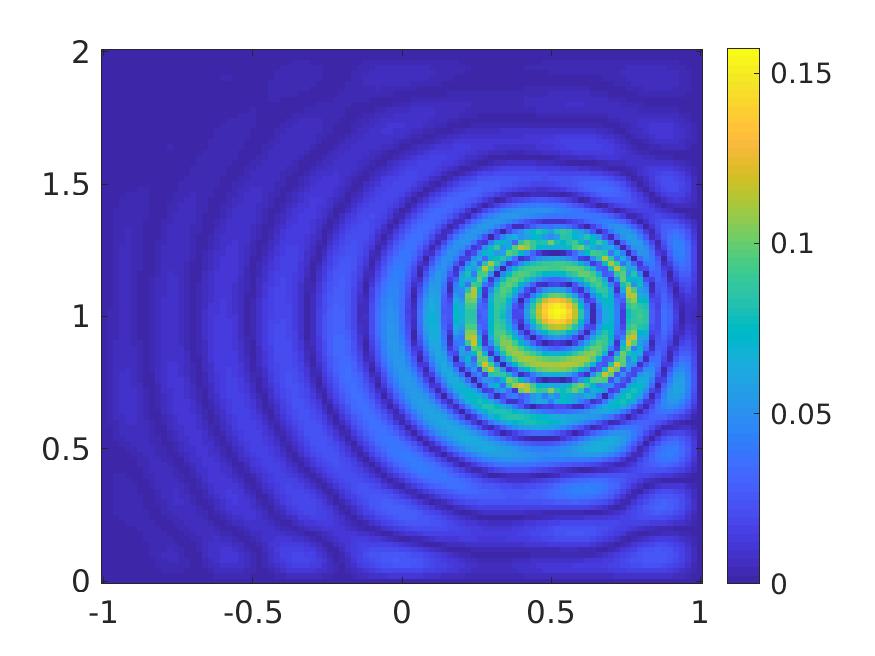}}
	\subfloat[$N = 20$]{\includegraphics[width=.3\textwidth]{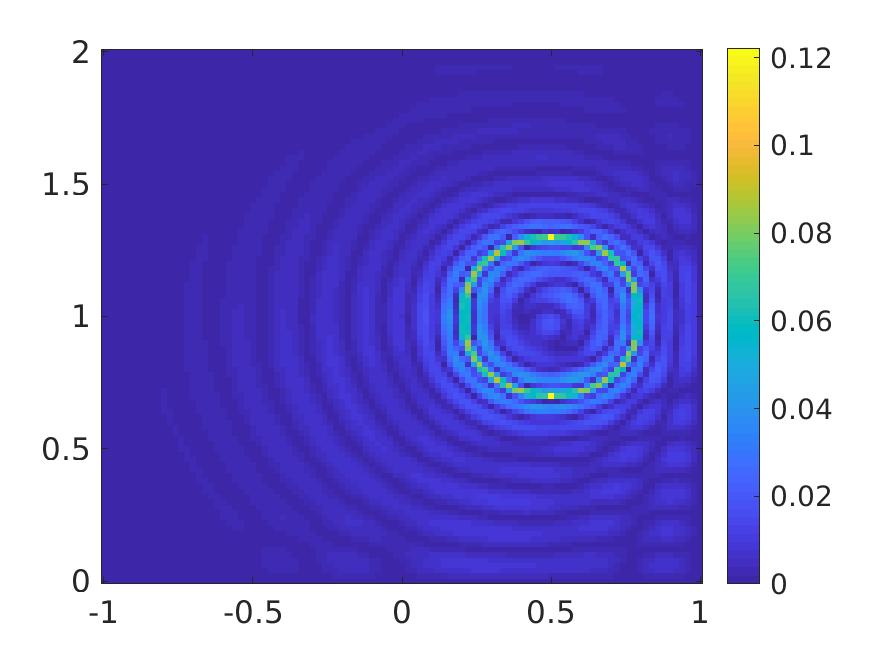}}
	\subfloat[N = 35]{\includegraphics[width=.3\textwidth]{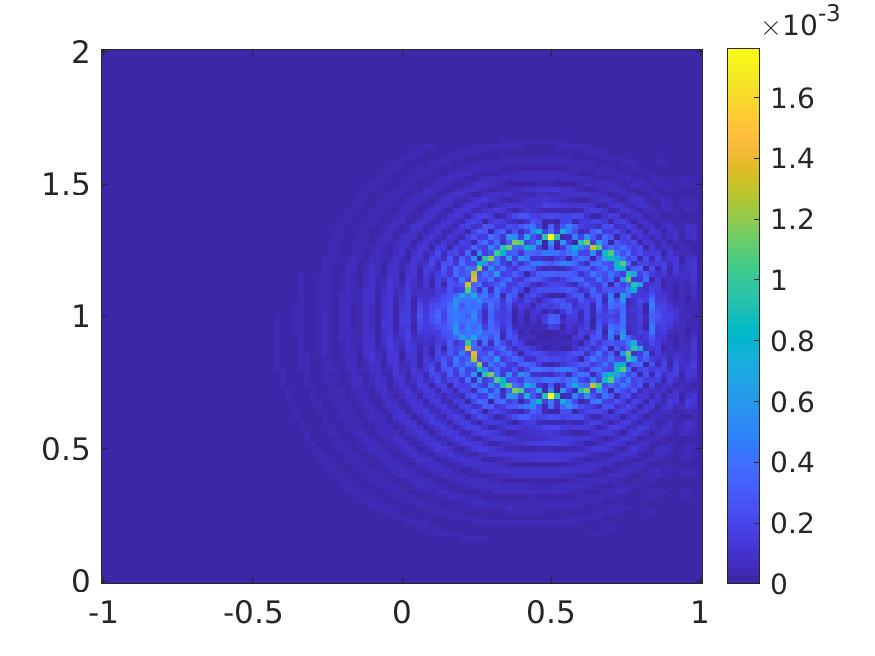}}
	\caption{\label{fig choose N} The function $\Big|u^*(\x, 0) - \ds\sum_{n = 1}^N u^*_n(\x)\Psi_n(0)\Big|$, $\x \in \Omega$ where the function $u^*$ is the solution to \eqref{1.1} and the source is given in Example 2.
	It is evident that the larger $N$, the better approximation in \eqref{2.2222} is.} 
\end{center}
\end{figure}

{\it Step \ref{step 1} in Algorithm \ref{alg 1}.} In this step, we apply the quasi-reversibility method to solve the system \eqref{2.5} with Cauchy data \eqref{data Neu}. 
That means, we minimize the functional $J$ defined in \eqref{2.9}.
The finite difference version of $J$, still called $J$, is
\begin{multline}
	J(u_1, \dots, u_N) = h_\x^2\sum_{m = 1}^N \sum_{i,  j = 2}^{N_\x-1} \Big|\Delta  u_m(x_i, y_j)
	+ \bb(x_i, y_j) \cdot \nabla u_m(x_i, y_j) 
	+ c(x_i, y_j) u_m(x_i, y_j) 
	\\
	- \sum_{n = 1}^N s_{mn} u_n(x_i, y_j)\Big|^2  
	+ h_\x\sum_{m = 1}^N \sum_{j = 1}^{N_\x} (|-\partial_x u_m(x_1, y_j) - \frak f_m(x_1, y_j)|^2 + |\partial_x u_m(x_{N_\x}, y_j) - \frak f_m(x_{N_\x}, y_j)|^2)
	\\
	+h_\x\sum_{m = 1}^N \sum_{i = 2}^{N_\x-1} (|-\partial_y u_m(x_i, y_1) - \frak f_m(x_i, y_1)|^2 + |\partial_y u_m(x_i, y_{N_\x}) - \frak f_m(x_i, y_{N_\x})|^2)
	\\
	+ h_\x \sum_{m = 1}^N \sum_{j = 1}^{N_\x} (|\ u_m(x_1, y_j)|^2 + |\ u_m(x_{N_\x}, y_j)|^2)
	+ h_\x \sum_{m = 1}^N \sum_{i = 2}^{N_\x-1} (|\ u_m(x_i, y_1) |^2 + |\ u_m(x_i, y_{N_\x})|^2)
	\\
	+ \epsilon h_\x^2 \sum_{m = 1}^N \sum_{i = 2}^{N_\x-1} |u_m(x_i, y_j)|^2 + |\nabla u_m(x_i, y_j)|^2 + |\Delta u_m(x_i, y_j)|^2.
	\label{5.1}
\end{multline}
Here, instead of imposing the constraint $u_m = 0$ on $\partial \Omega$, we add additional term:
 $ h_\x \sum_{m = 1}^N \sum_{j = 1}^{N_\x} (|\ u_m(x_1, y_j)|^2 + |\ u_m(x_{N_\x}, y_j)|^2) + h_\x \sum_{m = 1}^N \sum_{i = 2}^{N_\x-1} (|\ u_m(x_i, y_1) |^2 + |\ u_m(x_i, y_{N_\x})|^2)$ to the right hand side of \eqref{5.1}.
This technique significantly reduces the efforts in the implementation. 
We now identify the vector value function $(u_1, \dots, u_N)$ with it ``line up" version
\[
	\frak u_{\frak i} = u_m(x_i, y_j) \quad \mbox{where} \quad \frak i = (i - 1)NN_\x + (j - 1)N + m
\]
for $1 \leq i, j \leq N_\x$ and $1 \leq m \leq N$. 
The data $\frak{f}$ is also line-up in the same manner.
\[
	{\bf f}_{\frak i} = 	\frak f_m(x_i, y_j) \quad \mbox{where} \quad \frak i = (i - 1)NN_\x + (j - 1)N + m
\]
for $i \in \{1, N_\x\}$, $1 \leq j \leq N_\x$ or $1 \leq i \leq N_\x$, $j \in \{1, N_\x\}.$
It is not hard to rewrite $J$ in term of $\frak u$ as
\begin{equation}
	J(\frak u) = |\mathcal L \frak u|^2 + |\mathcal N \frak u - {\bf f}|^2 + |\mathcal D \frak u|^2 + \epsilon|(|\frak u|^2 + |D_x \frak u|^2 + |\frak u|^2 + |L_1 \frak u|^2) 
	\label{J FD}
\end{equation}   for some matrices $\mathcal L$, $\mathcal N$, $\mathcal D$, $D_x$, $D_y$ and $L_1$.
The matrix $\mathcal L$ is such that
\begin{equation*}
	(\mathcal L \frak u)_{\frak i} =  h_\x^2\Big(\Delta u_m(x_i, y_j)
	+ \bb(x_i, y_j) \cdot \nabla u_m(x_i, y_j) 
	+ c(x_i, y_j) u_m(x_i, y_j) 
	- \sum_{n = 1}^N s_{mn} u_n(x_i, y_j)\Big)
\end{equation*}
with $\frak i = (i - 1)NN_\x + (j - 1)N + m,$ $2 \leq i, j \leq N_\x - 1$ and $1 \leq m \leq N.$
The matrix $\mathcal N$ and $\mathcal D$ are the matrices such that $\mathcal N \frak u$ and $\mathcal D \frak u$ respectively correspond to the Neumann and Dirichlet values of $u_m(x_i, y_j)$ where $(x_i, y_j)$ is on $\partial \Omega$, $1 \leq m \leq N.$ The matrix $D_x$, $D_y$ and $L_1$ are such that $D_x \frak u$, $D_y \frak u$ and $L_1 \frak u$ correspond to $\partial_x u_m(x_i, y_j)$, $\partial_y u_m(x_i, y_j)$ and $\Delta u_m(x_i, y_j)$, $2 \leq i, j \leq N_\x-1$, $1 \leq m \leq N.$
The explicit forms of these matrices can be written similarly to \cite[Section 5.1]{LiNguyen:IPSE2019}. For the brevity, we do not repeat the details here.

Since $\frak u$ is the minimizer of $J$ defined in \eqref{J FD}, $u$ satisfies
\[	
	(\mathcal L^{\rm T} \mathcal L + \mathcal N^{\rm T} \mathcal N + \mathcal D^{\rm T} \mathcal D +\epsilon( \Id + D_x^{\rm T} D_x + D_y^{\rm T} D_y + L_1^{\rm T} L_1) ) \frak u = \mathcal N^{\rm T} {\bf f}
\]
where the superscript ${\rm T}$ indicates the transpose of matrices.
This linear system can be solve by any linear algebra package. We employ the command ``lsqlin" of MATLAB for this purpose. 
In all following examples, the regularization parameter $\epsilon$ is chosen to be $\epsilon = 10^{-12}$.

{\it Step \ref{step 4} of Algorithm \ref{alg 1}}. These steps can be implemented directly since they involve only explicit formulas.

Again, the implementation for solving Problem \ref{pro isp Neu} is similar to that for solving Problem \ref{pro isp Dir}. We do not repeat the full process for this case. For the brevity, we just provide some numerical results.

\subsection{Numerical examples}

We now perform four numerical examples for both Problem \ref{pro isp Dir} and Problem \ref{pro isp Neu}.

{\it Example 1.}
We consider the true source function given by 
\[
	p_1^*(x, y) = \left\{
		\begin{array}{ll}
			1 &\mbox{if } (x - 0.5)^2 + y^2 < 0.3^2,\\
			0 &\mbox{otherwise.}
		\end{array}
	\right.
\]

\begin{figure}[h!]
		\subfloat[The true source function]{\includegraphics[width = .3\textwidth]{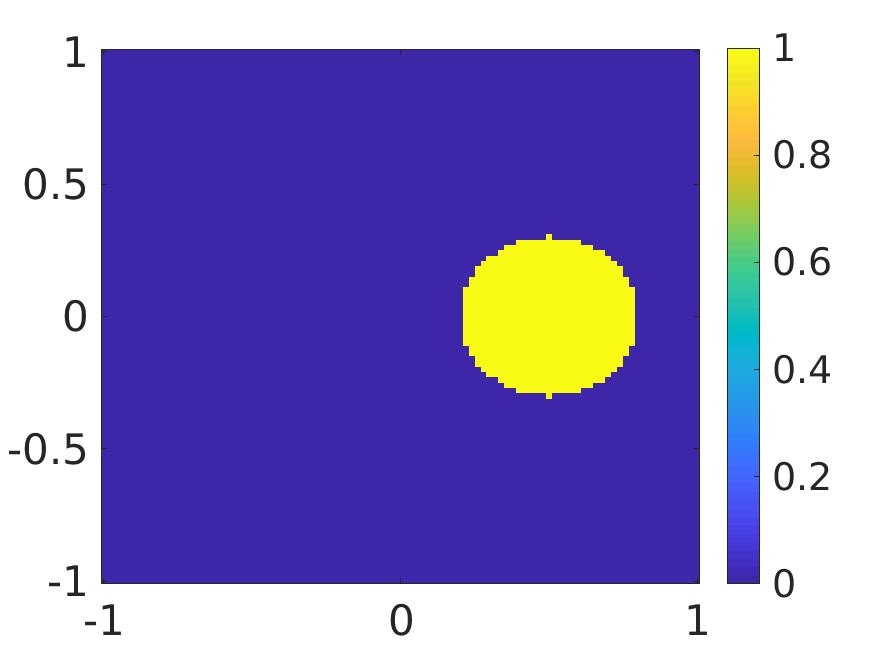}}
		\quad		
		\subfloat[The computed solution to Problem \ref{pro isp Dir} from data with $ 10\%$ noise]{\includegraphics[width = .3\textwidth]{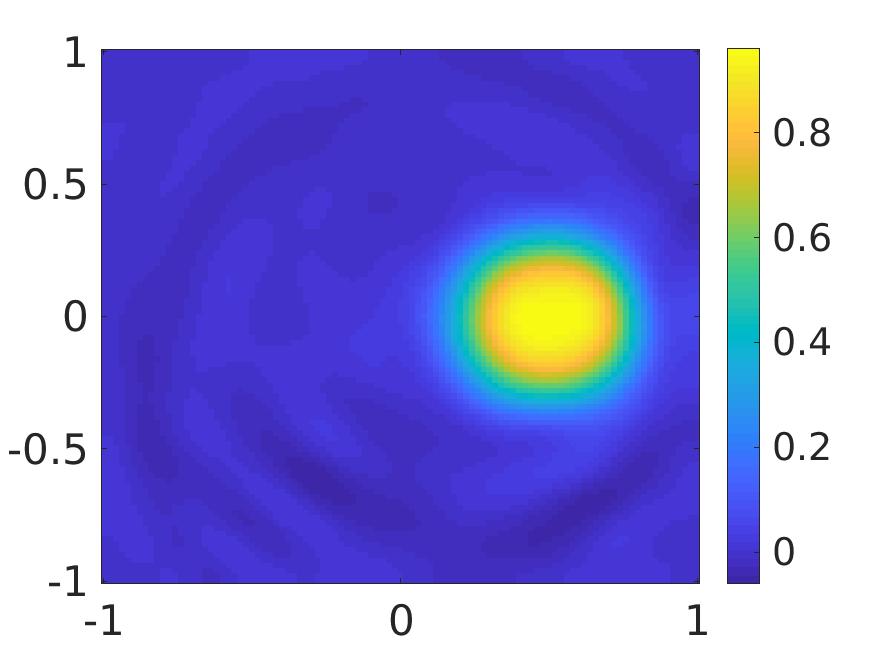}}
		\quad
		\subfloat[The computed solution to Problem \ref{pro isp Dir} from data with $ 100\%$ noise]{\includegraphics[width = .3\textwidth]{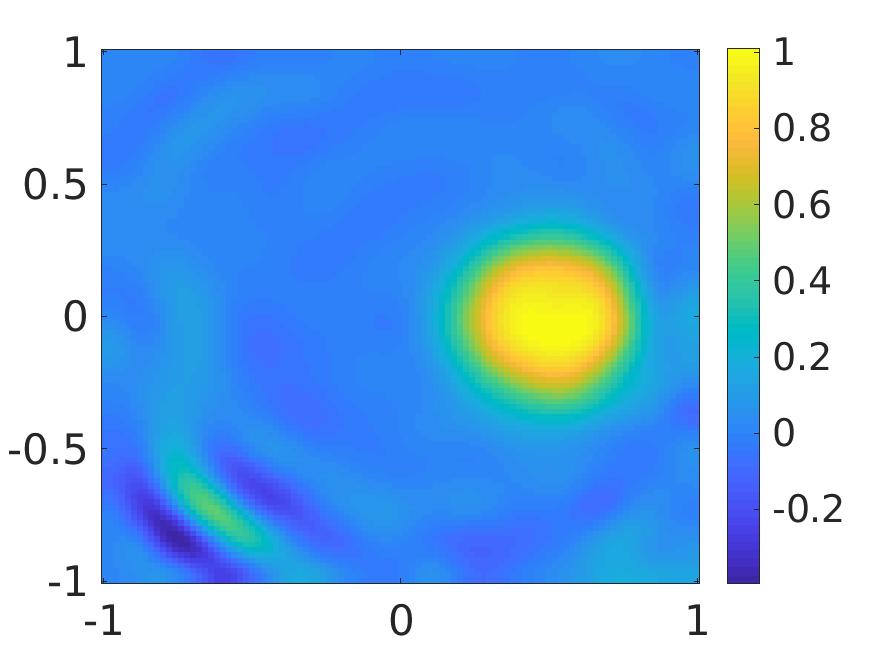}}
		
		\subfloat[The computed solution to Problem \ref{pro isp Neu} from data with $ 10\%$ noise]{\includegraphics[width = .3\textwidth]{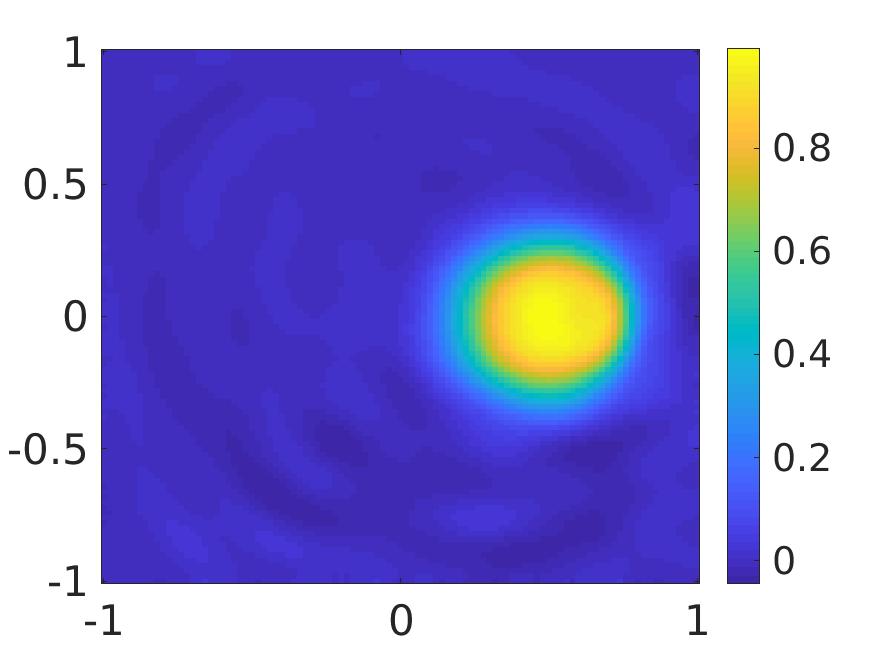}}
		\quad
		\subfloat[The computed solution to Problem \ref{pro isp Neu} from data with $ 100\%$ noise]{\includegraphics[width = .3\textwidth]{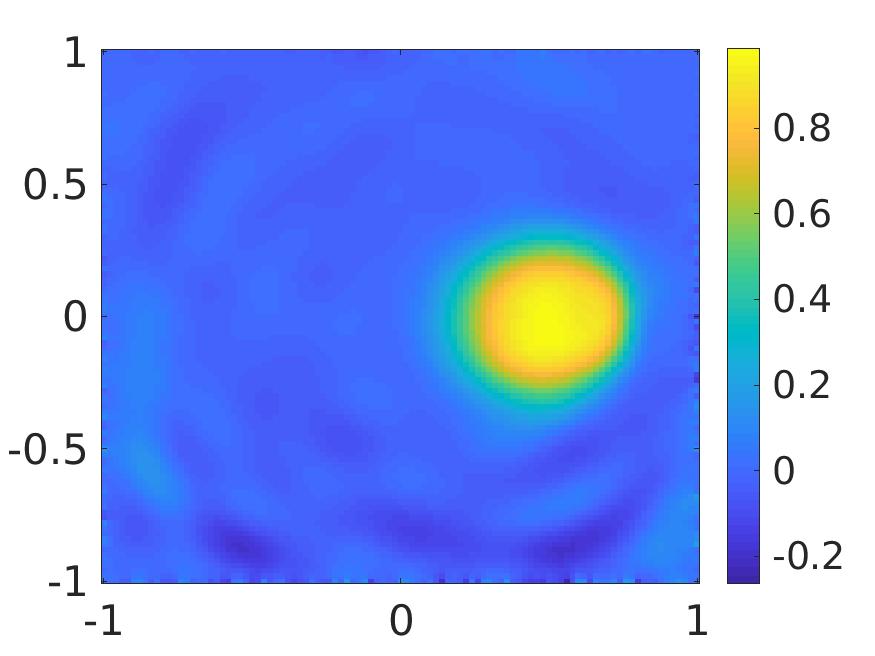}}

		\caption{\label{test 1}Example 1. The true source function and the reconstructions of source functions. }
\end{figure}

The numerical solutions are displayed in Figure \ref{test 1}, which show the accurate reconstructions of the shape and location of the source.
The computed values of the source functions for both Problem \ref{pro isp Dir} and Problem \ref{pro isp Neu} are quite accurate. 
Regarding to Problem \ref{pro isp Dir}, in the case $\delta = 10\%$ the maximal computed value of the source is 0.96115 (relative error 3.9\%) while in the case $\delta = 100\%,$ the maximal computed value of the source is 1.01114 (relative error 1.1\%).
Regarding to Problem \ref{pro isp Neu}, in the case $\delta = 10\%$ the maximal computed value of the source is 0.99389 (relative error 0.6\%) while in the case $\delta = 100\%,$ the maximal computed value of the source is 0.98797 (relative error 1.2\%). 

{\it Example 2}. We consider a more complicated source function
\[
	p_2^*(x, y) = 
	\left\{
		\begin{array}{ll}
			1 &\mbox{if }  (x - 0.5)^2 + y^2 < 0.3^2,\\
			2 &\mbox{if} \, \max\{|x + 0.5|, |y - 0.5|\} < 0.3^2,\\
			0 &\mbox{otherwise},
		\end{array}
	\right.
\]
where the support of the source function consists of a disk and a square, see Figure \ref{test 2} (a).

\begin{figure}[h!]
		\subfloat[The true source function]{\includegraphics[width = .3\textwidth]{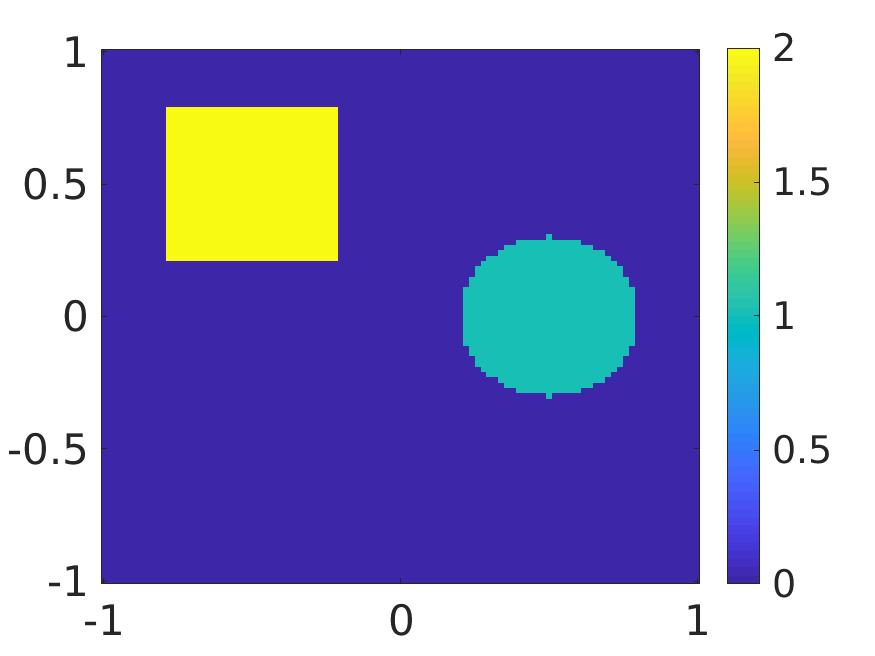}}
		\quad		
		\subfloat[The computed solution to Problem \ref{pro isp Dir} from data with $ 10\%$ noise]{\includegraphics[width = .3\textwidth]{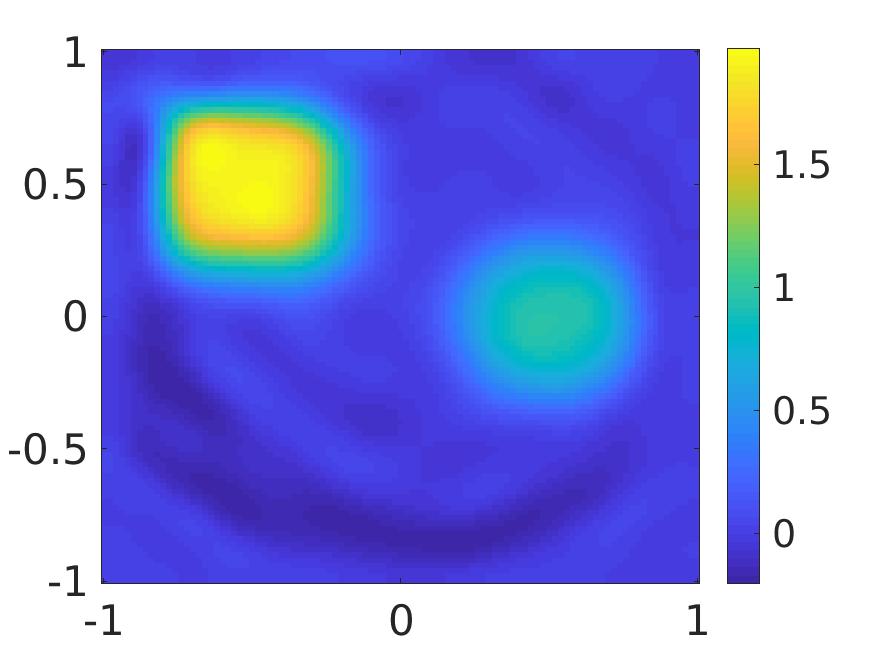}}
		\quad
		\subfloat[The computed solution to Problem \ref{pro isp Dir} from data with $ 100\%$ noise]{\includegraphics[width = .3\textwidth]{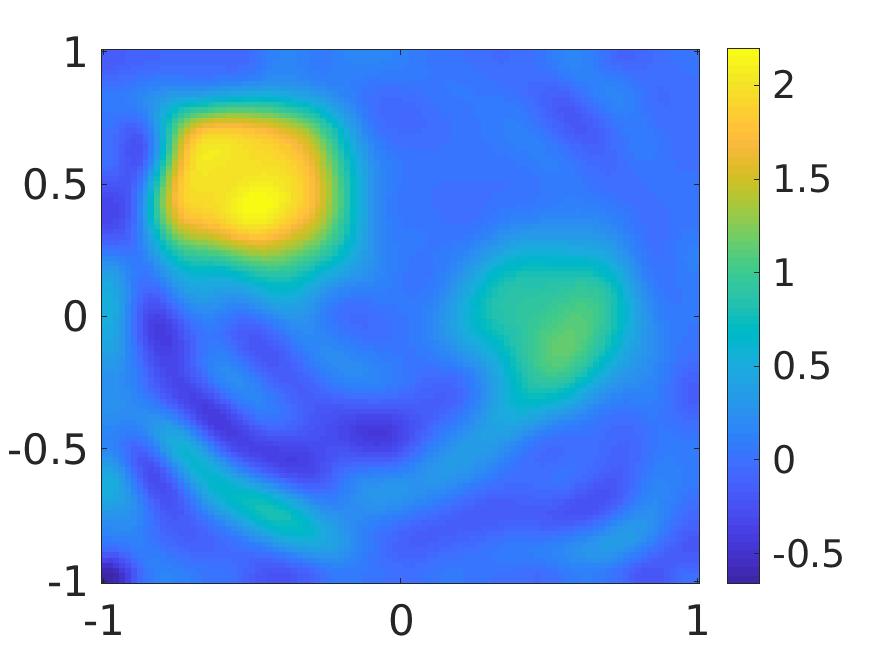}}
		
		\subfloat[The computed solution to Problem \ref{pro isp Neu} from data with $ 10\%$ noise]{\includegraphics[width = .3\textwidth]{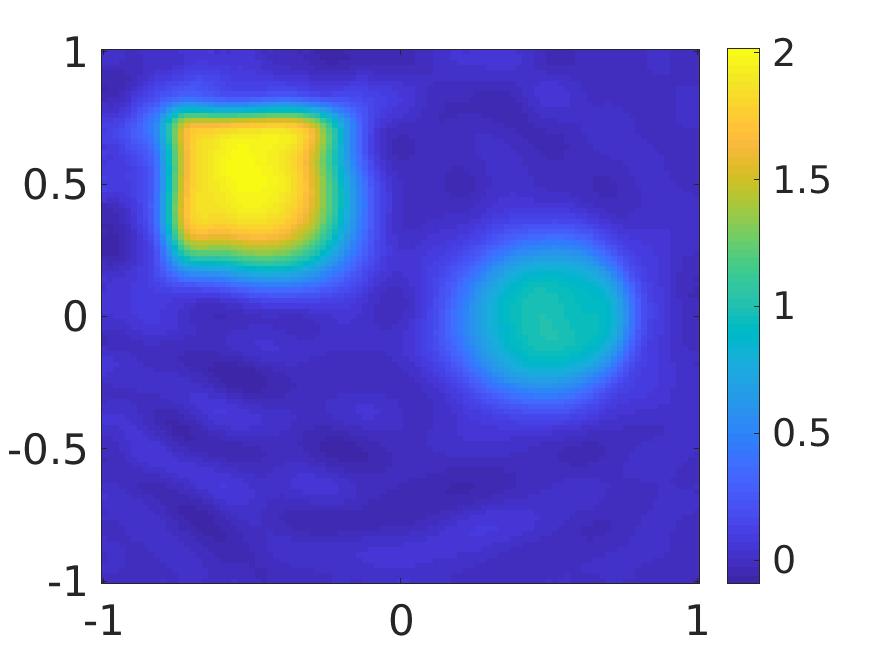}}
		\quad
		\subfloat[The computed solution to Problem \ref{pro isp Neu} from data with $ 100\%$ noise]{\includegraphics[width = .3\textwidth]{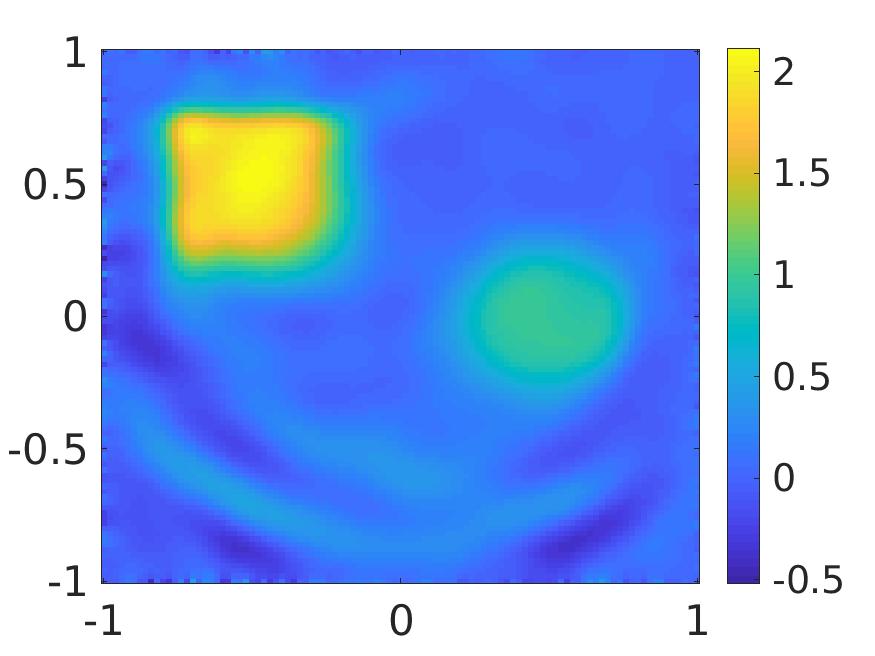}}

		\caption{\label{test 2}Example 2. The true source function and the reconstructions. }
\end{figure}

The reconstructions of source $p_2^{*}$ are displayed in Figure \ref{test 2}, which show the accurate reconstructions of the square and the disk.
The computed values of the source function are quite accurate. 
Regarding to Problem \ref{pro isp Dir}, in the case $\delta = 10\%$ the maximal computed values of the source in the square and the disk are 1.97386 (relative error 1.3\%) and  0.9608 (relative error 3.9\%) respectively 
while in the case $\delta = 100\%,$ the corresponding maximal computed values of the source are 2.19941 (relative error 9.9\%) and 1.157  (relative error 15.7\%) .
Regarding to Problem \ref{pro isp Neu}, in the case $\delta = 10\%$, the maximal computed values of the source in the square and the disk are 2.01843 (relative error 0.9\%) and 0.9994 (relative error 0.0\%) while in the case $\delta = 100\%$, the corresponding maximal computed values of the source are 2.11776 (relative error 5.9\%) and 0.9733 (relative error 2.3\%).
We observe that when the noise level is 100\%, the values of the source are well computed while and the reconstructed shapes of the inclusions start to break out.

{\it Example 3.} We next test the case where the support of the source has more complicated geometries than the one in Example 2, and the source has both positive and negative values. 
\[
	p_3^*(x, t) = \left\{
		\begin{array}{ll}
			3 &\mbox{if } \max\{2|x-0.5|, |y|\} < 0.7 \mbox{ and } (x - 0.5)^2 + y^2 \geq 0.2^2,\\
			-2.5 &\mbox{if } 7(x+0.6)^2 + (y - 0.4)^2 \leq 0.5^2,\\
			0 &\mbox{otherwise}.
		\end{array}
	\right.
\]
The support of the source involves a rectangle with a void and an ellipse, see Figure \ref{test 3}(a).
\begin{figure}[h!]
		\subfloat[The true source function]{\includegraphics[width = .3\textwidth]{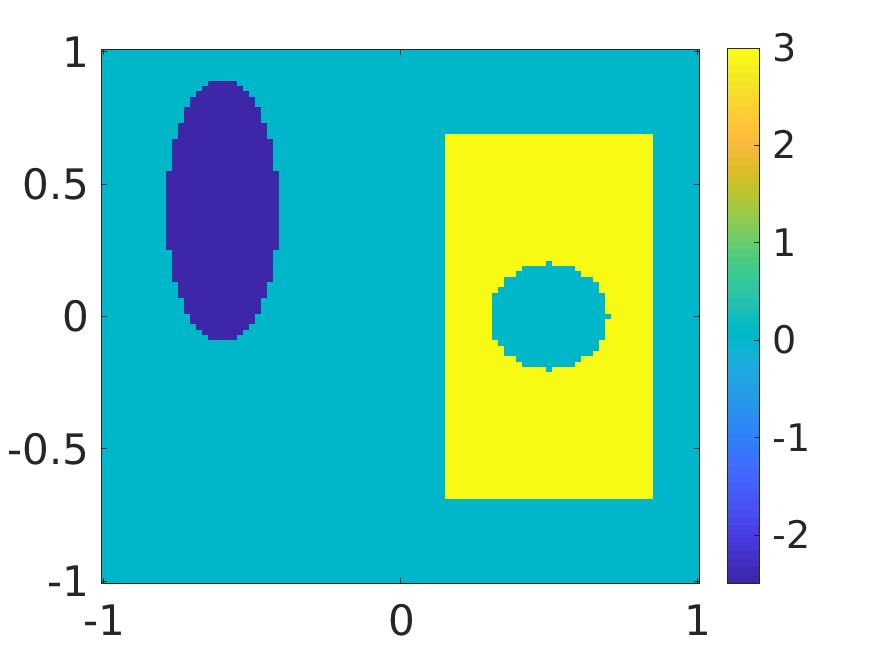}}
		\quad		
		\subfloat[The computed solution to Problem \ref{pro isp Dir} from data with $ 10\%$ noise]{\includegraphics[width = .3\textwidth]{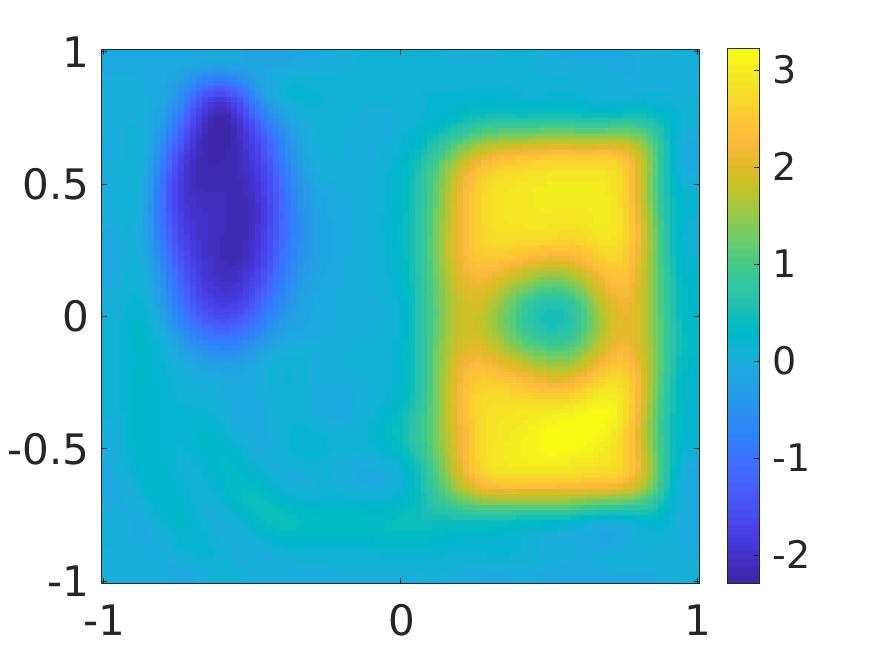}}
		\quad
		\subfloat[The computed solution to Problem \ref{pro isp Dir} from data with $ 100\%$ noise]{\includegraphics[width = .3\textwidth]{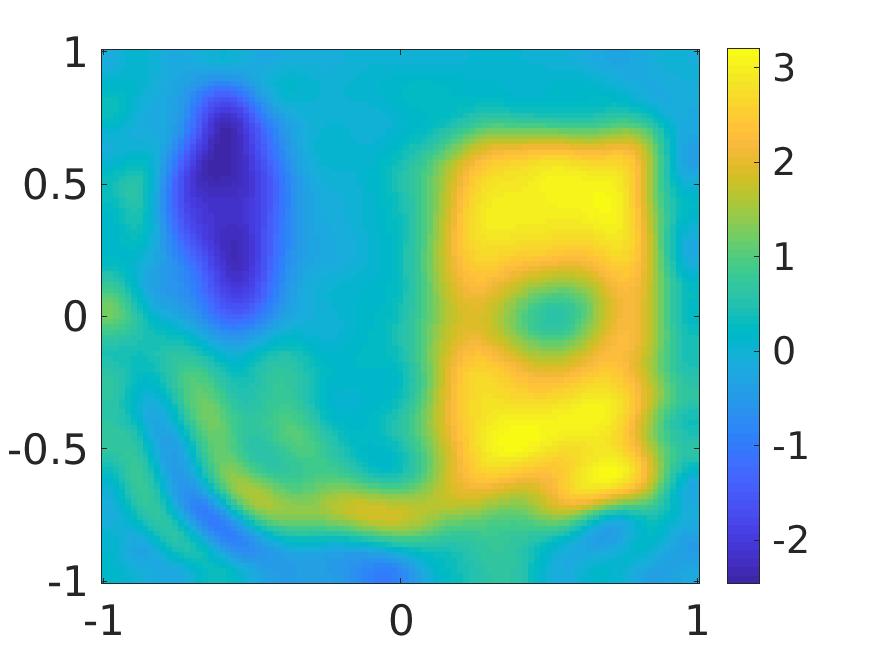}}
		
		\subfloat[The computed solution to Problem \ref{pro isp Neu} from data with $ 10\%$ noise]{\includegraphics[width = .3\textwidth]{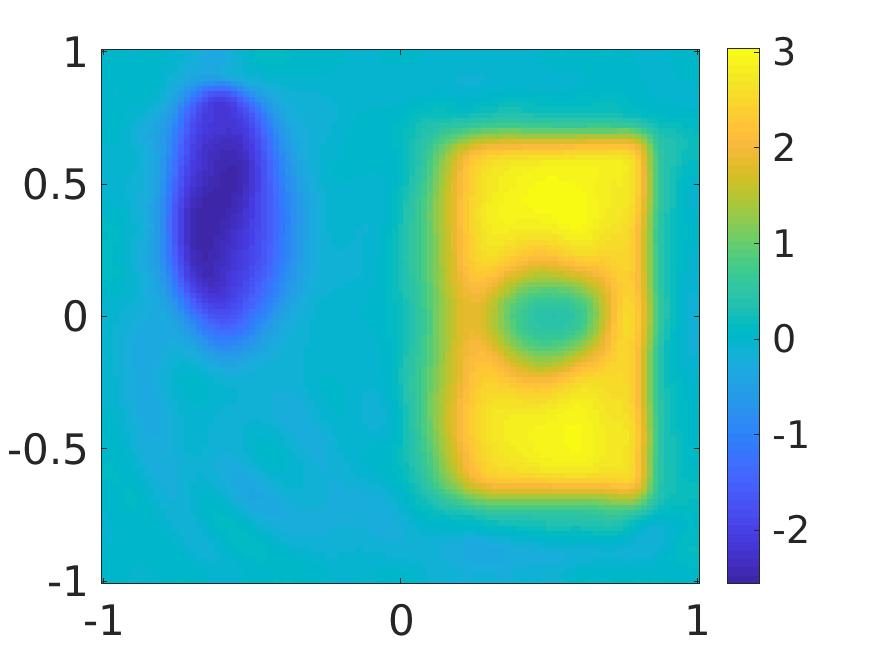}}
		\quad
		\subfloat[The computed solution to Problem \ref{pro isp Neu} from data with $ 100\%$ noise]{\includegraphics[width = .3\textwidth]{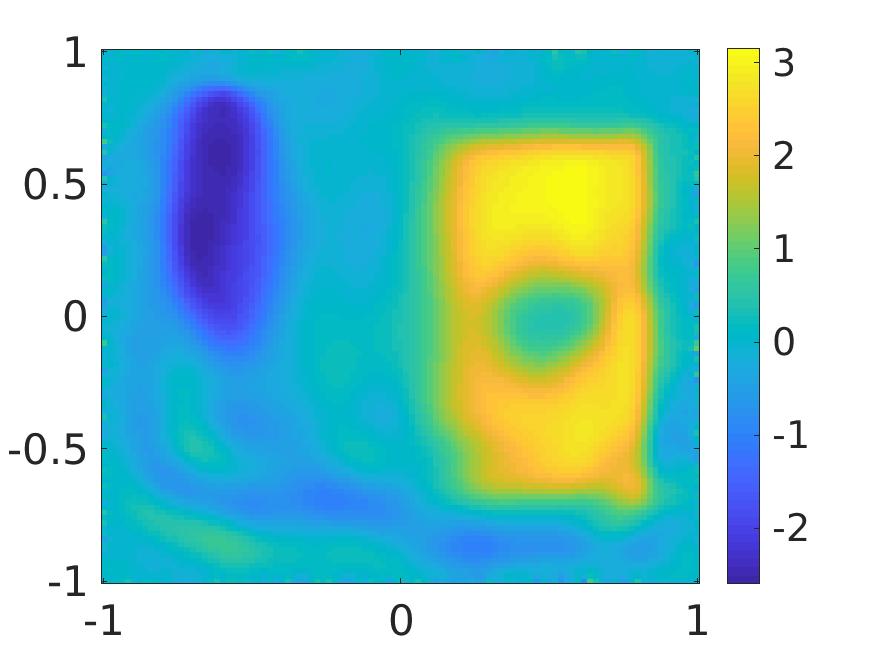}}

		\caption{\label{test 3}Example 3. The true source function and the reconstructions. }
\end{figure}

The numerical solutions of Example 3 are displayed in Figure \ref{test 3}, which show the accurate reconstructions of the rectangle with the void and the ellipse.
The computed values of the source function are quite accurate. 
Regarding to Problem \ref{pro isp Dir}, in the case $\delta = 10\%$ the maximal and minimal computed values of the source is 3.22793 (relative error 7.6\%) and $-2.2951$ (relative error 8.2\%) respectively 
while in the case $\delta = 100\%,$ the maximal and minimal computed values of the source are 3.21003 (relative error 7.0\%) and $-2.4654$ (relative error 1.4\%) respectively.
Regarding to Problem \ref{pro isp Neu}, in the case $\delta = 10\%$, the maximal and minimal computed values of the source are 3.04546 (relative error 1.5\%) and $-2.5617$ (relative error 3.1\%) respectively 
while in the case $\delta = 100\%$, the maximal and minimal computed values of the source are 3.15653 (relative error 5.2\%) and $-2.5978$ (relative error 3.9\%) respectively.
We observe that when the noise level is 100\%, the reconstructed values of the source are almost exact and the shapes of the inclusions are still acceptable. However, some artifacts are present.

{\it Example 4.} In this example, the true source function $p^*$ is the characteristic function of the letter $T$. 

\begin{figure}[h!]
		\subfloat[The true source function]{\includegraphics[width = .3\textwidth]{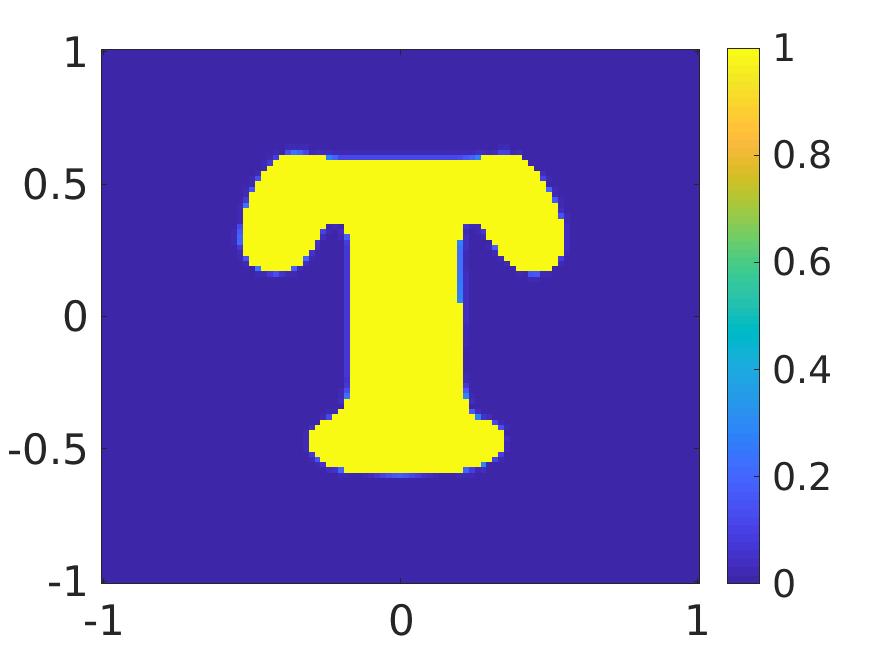}}
		\quad
		\subfloat[The computed solution to Problem \ref{pro isp Dir} from data with $ 10\%$ noise]{\includegraphics[width = .3\textwidth]{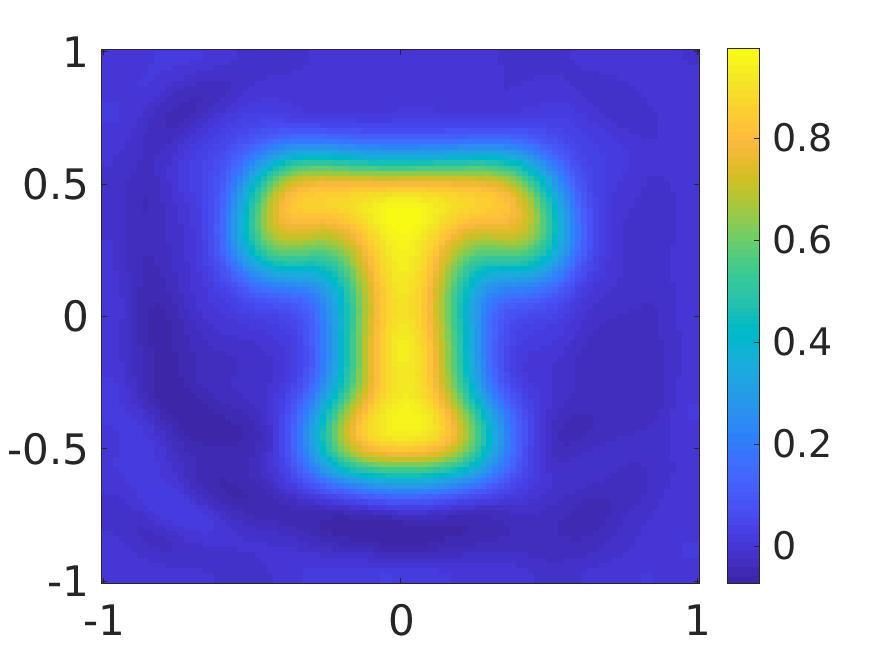}}
		\quad
		\subfloat[The computed solution to Problem \ref{pro isp Dir} from data with $ 100\%$ noise]{\includegraphics[width = .3\textwidth]{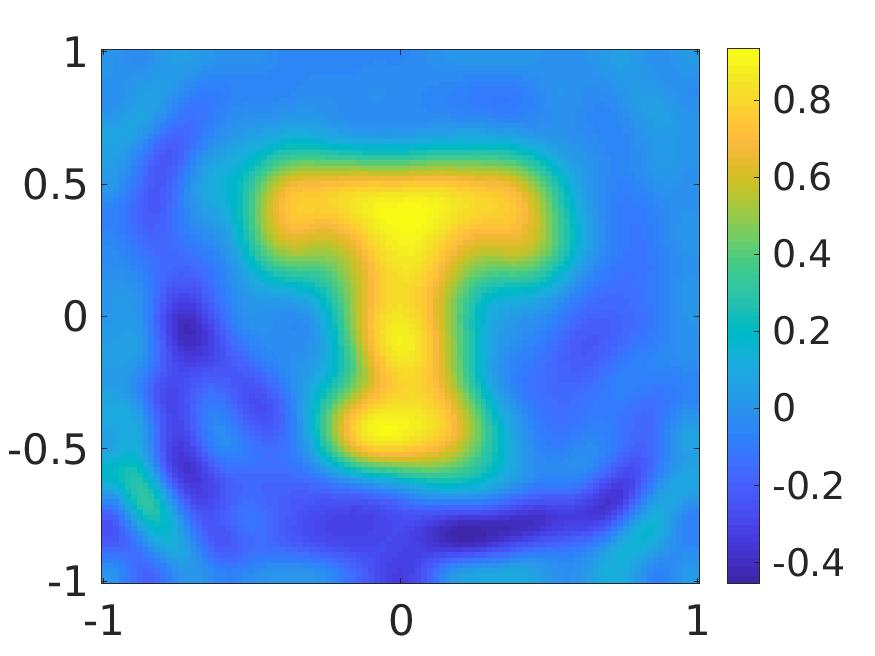}}
		
		\subfloat[The computed solution to Problem \ref{pro isp Neu} from data with $ 10\%$ noise]{\includegraphics[width = .3\textwidth]{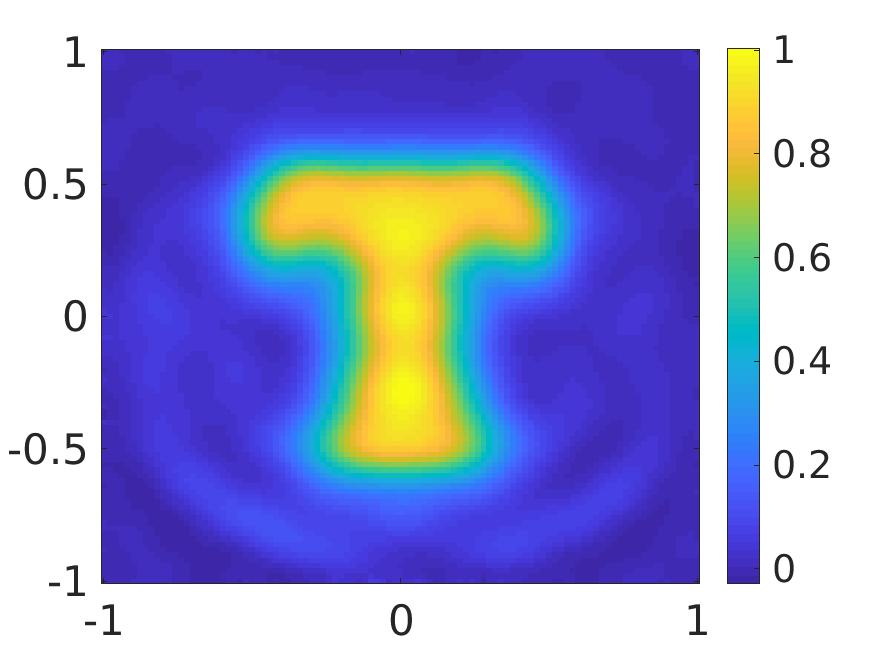}}
		\quad
		\subfloat[The computed solution to Problem \ref{pro isp Neu} from data with $ 100\%$ noise]{\includegraphics[width = .3\textwidth]{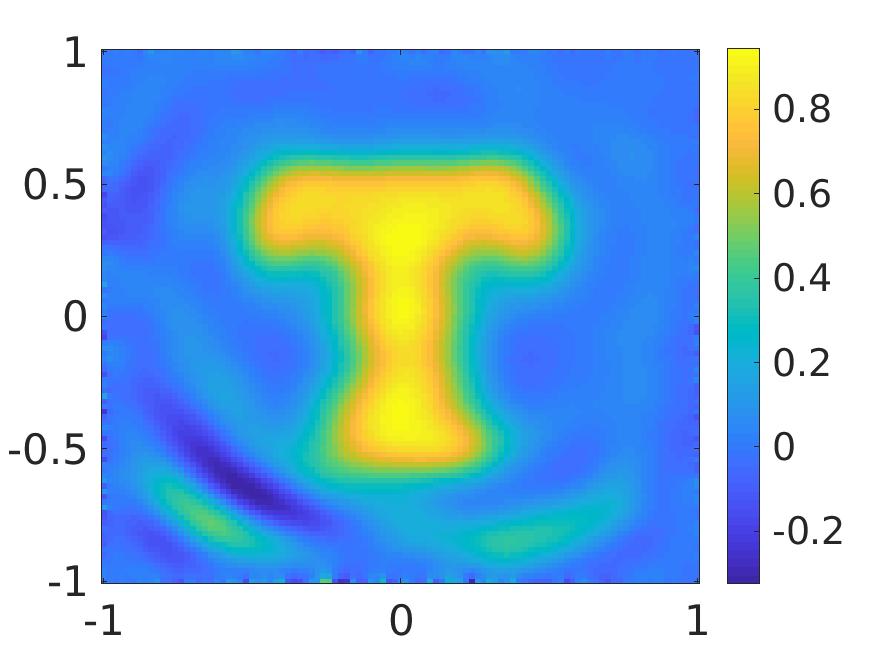}}
		
		\caption{\label{test 4}Example 4. The true source function and the reconstructions. }
\end{figure}

The numerical solutions of Example 4 are displayed in Figure \ref{test 4}, which show the accurate reconstructions of the letter $T$.
The computed values of the source function are quite accurate. 
Regarding to Problem \ref{pro isp Dir}, in the case $\delta = 10\%$ the maximal computed value of the source are 0.97705 (relative error 2.3\%)  while in the case $\delta = 100\%,$ the maximal computed value of the source is 0.93572 (relative error 6.4\%).
Regarding to Problem \ref{pro isp Neu}, in the case $\delta = 10\%$, the maximal computed value of the source is 1.00401 (relative error 0.4\%) and in the case $\delta = 100\%$, the maximal computed value of the source is 0.94551 (relative error 5.4\%).
We observe that when the noise level is 100\%, the reconstructed values and the $T$-shape of the source meet the expectation.

\section{Numerical examples in 1D and the efficiency of the basis $\{\Psi_n\}_{n \geq 1}$}\label{sec Klibanov}

As mention in Remark \ref{basis Psi}, we choose the basis $\{\Psi_n\}_{n \geq 1}$ rather than the the ``sin and cosine" basis of the well-known Fourier series. 
To numerically verifying that this choice is important, we compare some 1D numerical solutions to Problem \ref{pro isp Dir} obtained by our method with respect to two bases: (1) the trigonometric Fourier expansion and (2) the basis $\{\Psi_n\}_{n \geq 1}$. 
We will show that the numerical results in case (1) do not meet the expectation while the numerical results in case (2) do.

For the simplicity, we drop the damping term $a u_t$ in the governing equation.
The governing model is
\begin{equation}
	\left\{
		\begin{array}{rcll}
			\ba(x)u_{tt}(x, t) &=& u_{xx}(x, t) + \bb(x)u_x(x, t) + \bc(x)u(x, t) &(x, t) \in (-1, 1) \times [0, T],\\
			u_t(x, t) &=& 0 &x \in (-1, 1),\\
			u(x, 0) &=& p(x) &x \in (-1, 1),\\
			u(x, t) &=& 0 &x \in \{-1, 1\}.
		\end{array}
	\right.
	\label{main eq}
\end{equation}
Here, we choose $T = 4$. 
The aim of Problem \ref{pro isp Dir} is to compute the initial condition $p(x)$ from the measurement of 
\begin{equation}
	f_1(\pm 1, t) = u_x(\pm 1, t) \quad t \in [0, T].
\end{equation}

We now try the trigonometric Fourier expansion to solve Problem \ref{pro isp Dir}.
In this section, we display the numerical results with the cut-off number $N = 35$. 
We have tried the cases when $N = 50$ and $N = 100$ but the quality of the computed sources does not improve.
For each $x \in (-1, 1)$, we approximate $u(x, t)$ by the $N-$partial sum of its Fourier series
\begin{equation}
	u(x, t) = u_0(x) + \sum_{n = 1}^N u_n(x) \cos \Big(\frac{2\pi n t}{T}\Big) +  	\sum_{n = 1}^N v_n(x)\sin \Big(\frac{2\pi n t}{T}\Big)
	\label{Fourier}
\end{equation}
where 
\[	
	\left\{
		\begin{array}{ll}
		u_0(x) =\ds \frac{1}{T} \int_0^T u(x, t) dt,\\
		u_n(x) = \ds \frac{2}{T}\int_0^T u(x, t)\cos\Big(\frac{2\pi n t}{T}\Big) & n \geq 1,\\
		v_n(x) = \ds \frac{2}{T}\int_0^T u(x, t)\sin\Big(\frac{2\pi n t}{T}\Big) & n \geq 1,\\
		\end{array}
	\right.
	\quad \mbox{for } x \in (-1, 1).
\]

Differentiate \eqref{Fourier} with respect to $t$, we have
\begin{equation}
	u_{tt}(x, t) = -\sum_{n = 1}^N \Big(\frac{2\pi n}{T}\Big)^2u_n(x) \cos \Big(\frac{2\pi n t}{T}\Big)  -  	\sum_{n = 1}^N \Big(\frac{2\pi n}{T}\Big)^2 v_n(x)\sin \Big(\frac{2\pi n t}{T}\Big)
	\label{utt}
\end{equation}

Plugging \eqref{Fourier} and \eqref{utt} into \eqref{main eq} gives
\begin{multline}
	-\ba(x)\Big[\sum_{n = 1}^N \Big(\frac{2\pi n}{T}\Big)^2u_n(x) \cos \Big(\frac{2\pi n t}{T}\Big)  +  	\sum_{n = 1}^N \Big(\frac{2\pi n}{T}\Big)^2 v_n(x)\sin \Big(\frac{2\pi n t}{T}\Big)\Big]
	\\
	=  \Big[
	u_0''(x) + \sum_{n = 1}^N u_n''(x) \cos \Big(\frac{2\pi n t}{T}\Big) +  	\sum_{n = 1}^N v_n''(x)\sin \Big(\frac{2\pi n t}{T}\Big) 
	\Big]
	\\
		+ \bb(x)\Big[
	u_0'(x) + \sum_{n = 1}^N a_n'(x) \cos \Big(\frac{2\pi n t}{T}\Big) +  	\sum_{n = 1}^N v_n'(x)\sin \Big(\frac{2\pi n t}{T}\Big) 
	\Big]
	\\
	+ \bc(x)\Big[
	u_0(x) + \sum_{n = 1}^N u_n(x) \cos \Big(\frac{2\pi n t}{T}\Big) +  	\sum_{n = 1}^N v_n(x)\sin \Big(\frac{2\pi n t}{T}\Big) 
	\Big]
	\label{6.4}
\end{multline}
for all $x \in (-1, 1),$ $t \in [0, T].$
Hence, we have
\begin{equation}
	\begin{array}{ll}
		u_0''(x) + \bb(x) u'_0(x) + \bc(x) u_0(x) = 0,\\
		\ds u_n''(x) + \bb(x) u'_n(x) + \Big[\Big(\frac{2\pi n}{T}\Big)^2a(x)  + \bc(x)\Big]u_n(x) = 0 &n\geq 1\\
		\ds v_n''(x) + \bb(x) v'_n(x) + \Big[\Big(\frac{2\pi n}{T}\Big)^2a(x)  + \bc(x)\Big]v_n(x) = 0 &n\geq 1
	\end{array}
	\label{6.6}
\end{equation}
for $x \in (-1, 1).$
The boundary constraints of $u_0, \{u_n\}_{n \geq 1}$ and $\{v_n\}_{n \geq 1}$ are  
\begin{equation}
	\begin{array}{ll}
		u_0(\pm 1) =  u_n(\pm 1) = v_n(\pm 1) =  0 &n\geq 1,\\
		\ds (u_0)_x(\pm 1) = \frac{1}{T} \int_0^T f_1(\pm 1, t)dt,\\
		\ds (u_n)_x(\pm 1) = \frac{2}{T} \int_0^T f_1(\pm 1, t)\cos\Big(\frac{2\pi t}{T}\Big)  dt &n\geq 1,\\
		\ds (v_n)_x(\pm 1) = \frac{2}{T} \int_0^T f_1(\pm 1, t)\sin\Big(\frac{2\pi t}{T}\Big)  dt &n\geq 1
	\end{array}
	\label{6.7}
\end{equation}
We solve \eqref{6.6}--\eqref{6.7} for $u_0, u_n, v_n$ for $n \geq 1$.
Then, the function initial condition is given by $u(x, 0)$ where $u(x, t)$ is given by \eqref{Fourier}.
We skip presenting the implementation for this method. 
The implementation is similar but simpler than the implementation for the 2D case.

In the numerical tests, we set $\ba(x) = 1 + \sin^2(x^2)$, $\bb(x) = \sin(\pi x)$ and $\bc(x) = \cos(2\pi x)$ for $x \in (-1, 1).$
In this section, we perform three (3) tests. The source functions $p_1$, $p_2$ and $p_3$ correspond to Test 1, Test 2 and Test 3 are given below:
\[
\left.
	\begin{array}{ll}
	p_{1} = \left\{
		\begin{array}{ll}
			\exp(|x - 0.2|^2/(|x - 0.2|^2 - 0.3^2)) &|x - 0.2| < 0.3,\\
			0 &\mbox{otherwise}.
		\end{array}
			\right. \\[2.5ex]
	p_{2} = 1 - x^2, \\[1.5ex]
	p_{3} = \sin(\pi x^3)
	\end{array}
\right.
\]

We compute these source functions with noise level 5\%.
The numerical examples are displayed in Figure \ref{fig 1D}.
It is evident that approximating $u(x, t)$ with the basis $\{\Psi_n\}_{n \geq 1}$ provides much better solutions to Problem \ref{pro isp Dir} in comparison to approximating $u(x, t)$ with the popular ``sin and cosine" basis.
\begin{figure}
\begin{center}
	\subfloat[Test 1]{\includegraphics[width = .3\textwidth]{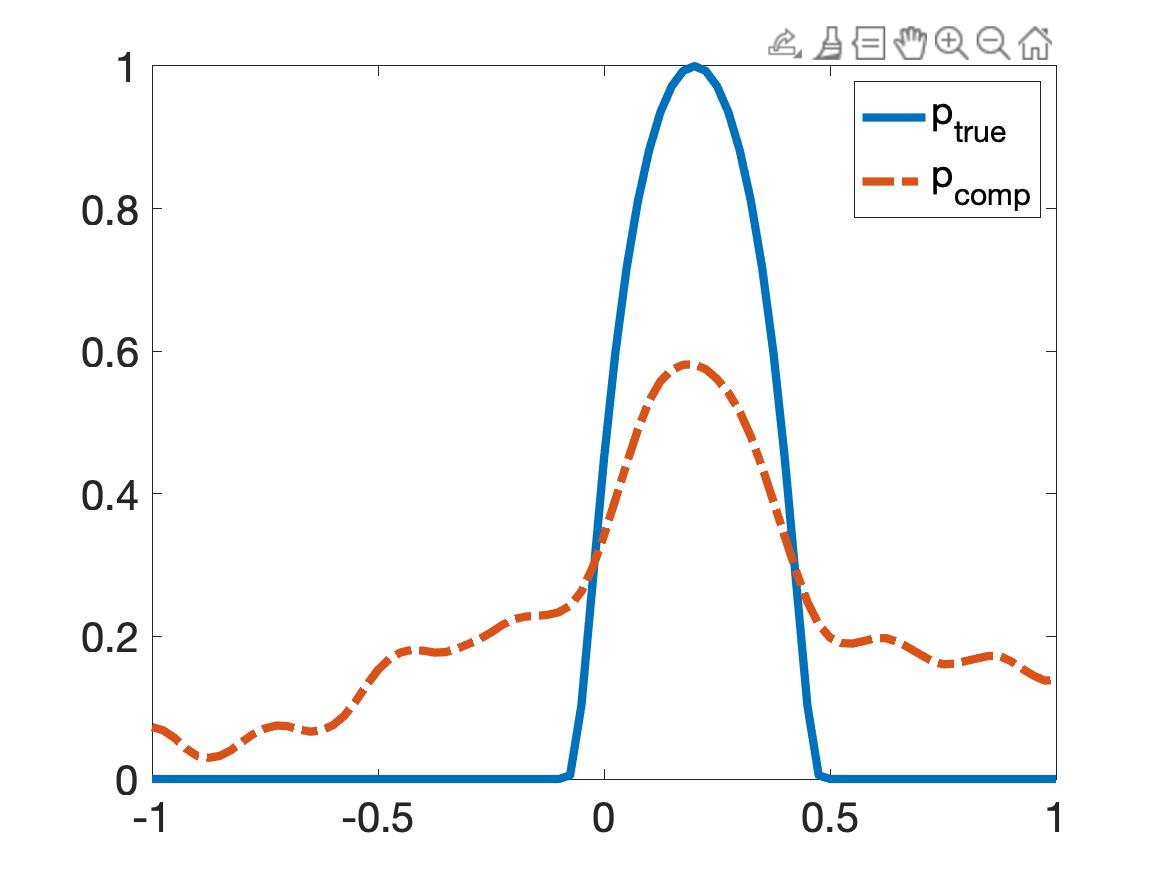}}
	\hfill
	\subfloat[Test 2]{\includegraphics[width = .3\textwidth]{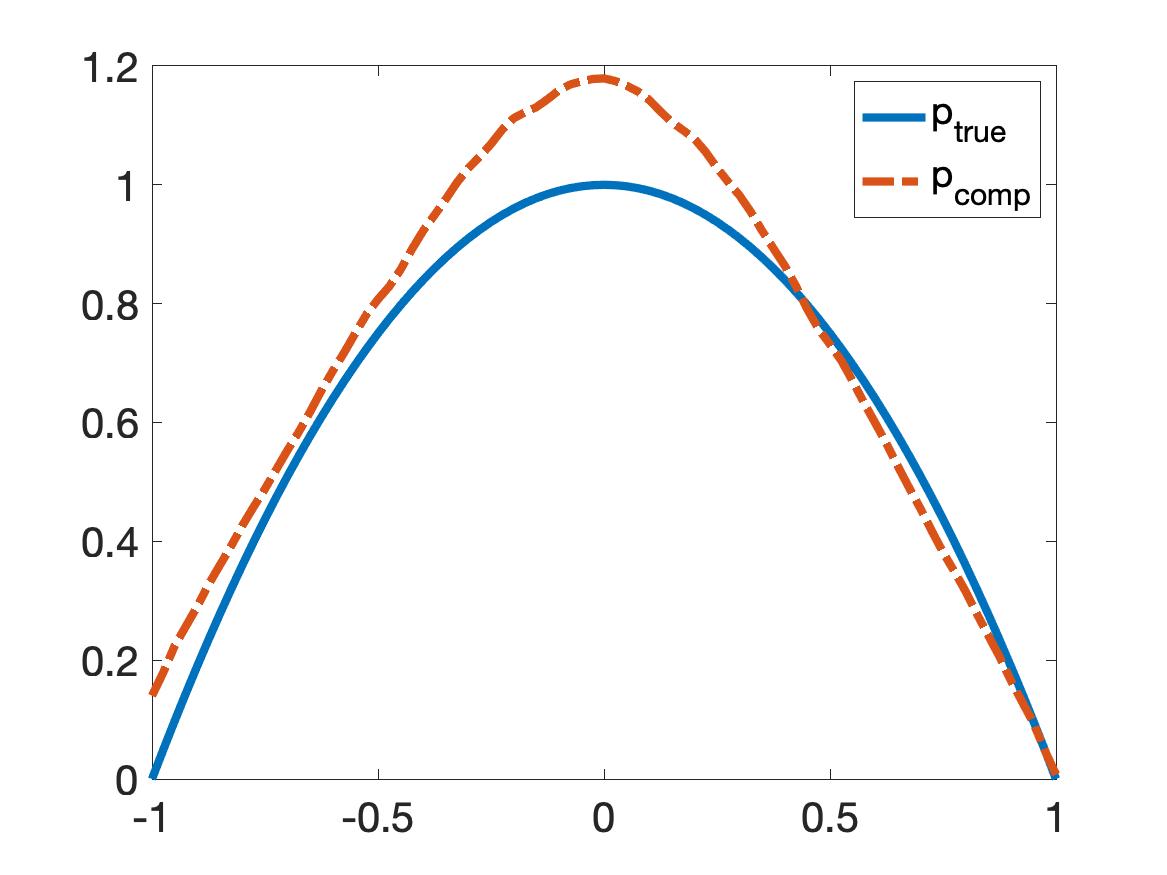}}
	\hfill
	\subfloat[Test 3]{\includegraphics[width = .3\textwidth]{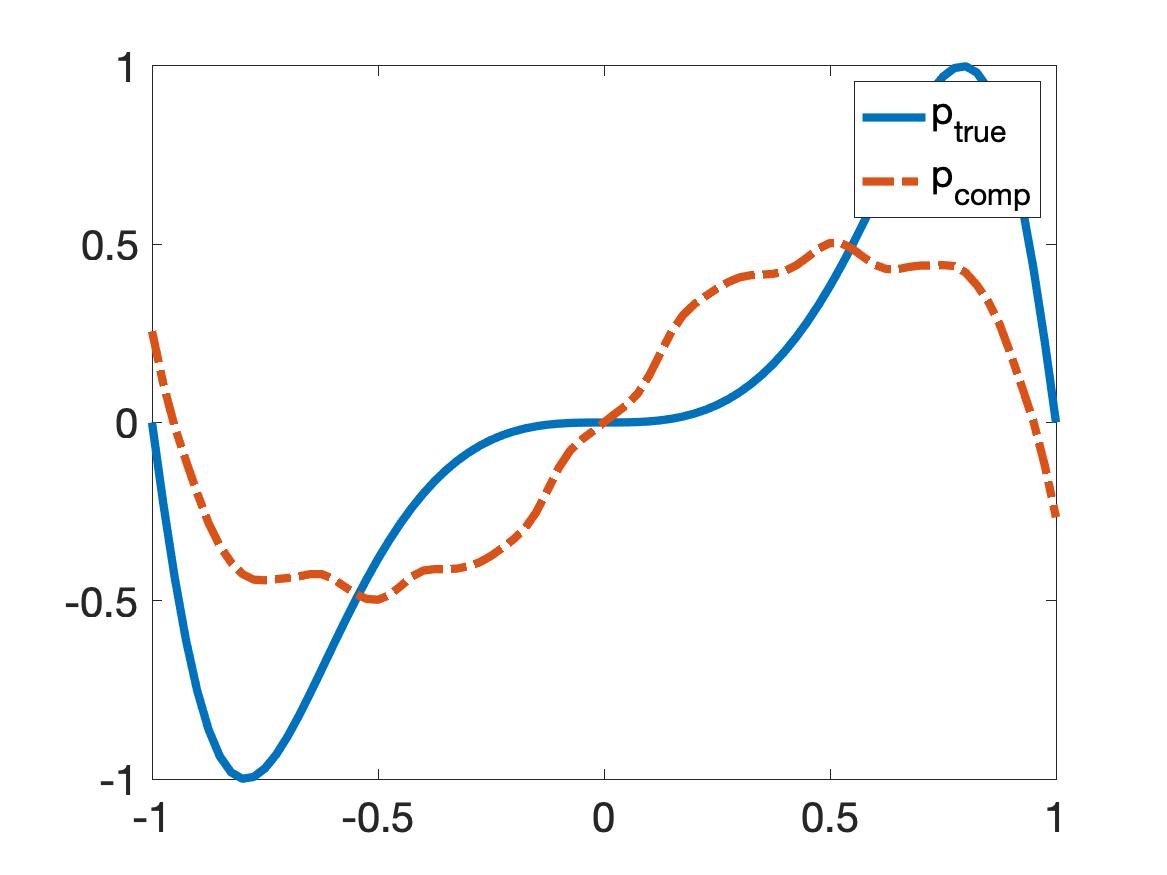}}
	
	\subfloat[Test 1]{\includegraphics[width = .3\textwidth]{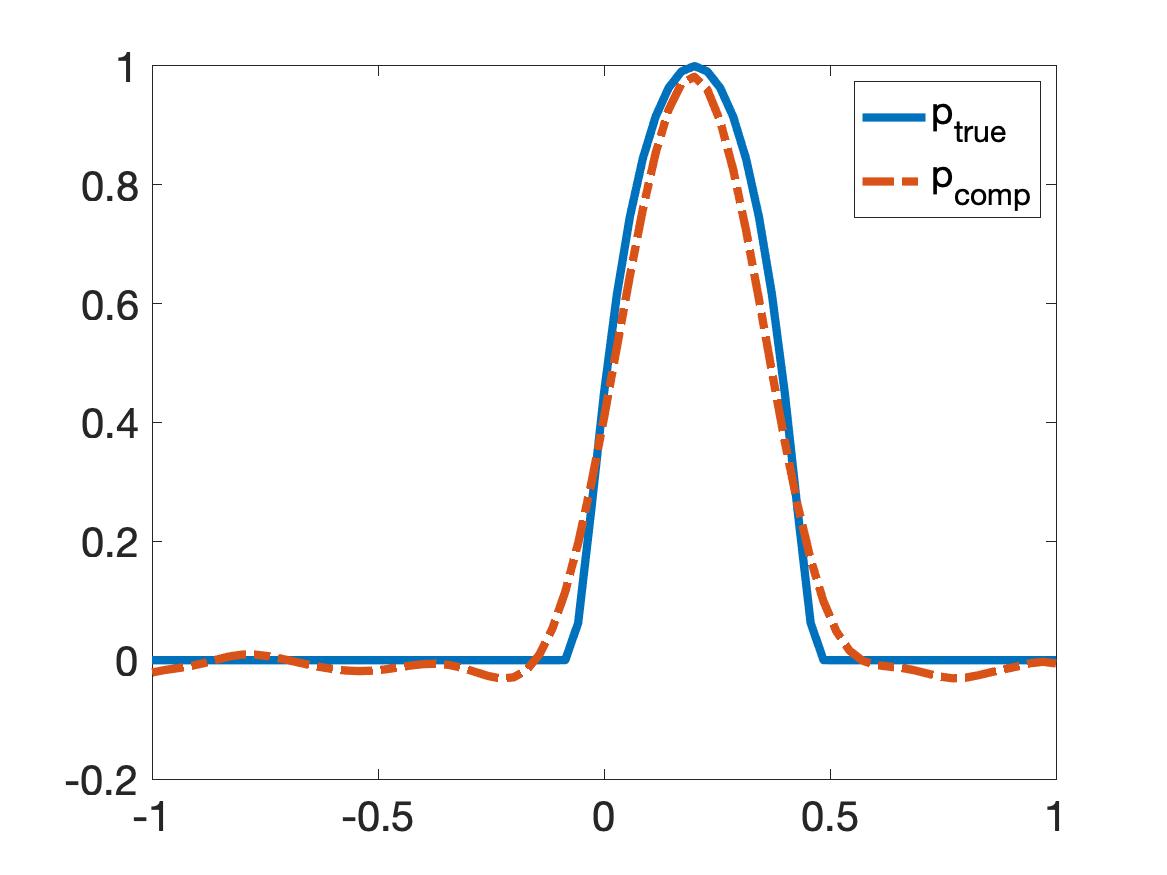}}
	\hfill
	\subfloat[Test 2]{\includegraphics[width = .3\textwidth]{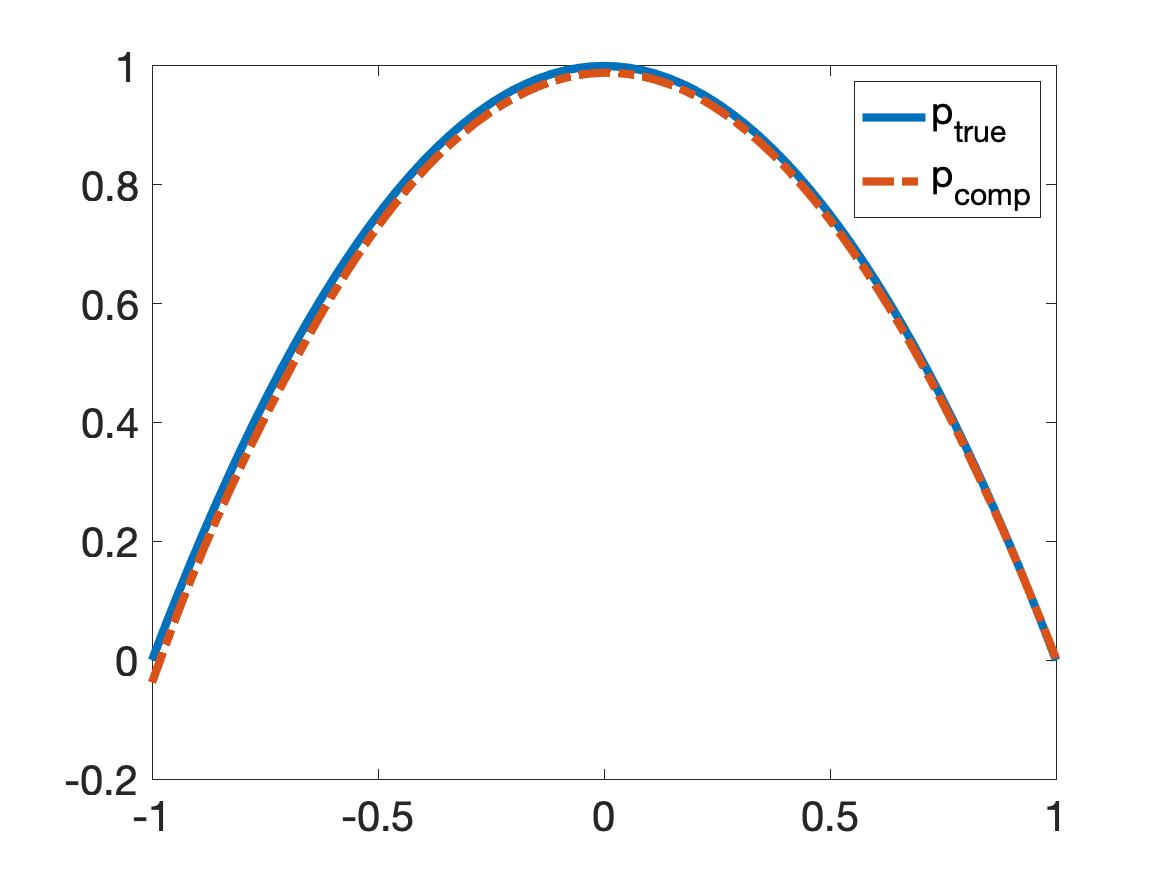}}
	\hfill
	\subfloat[Test 3]{\includegraphics[width = .3\textwidth]{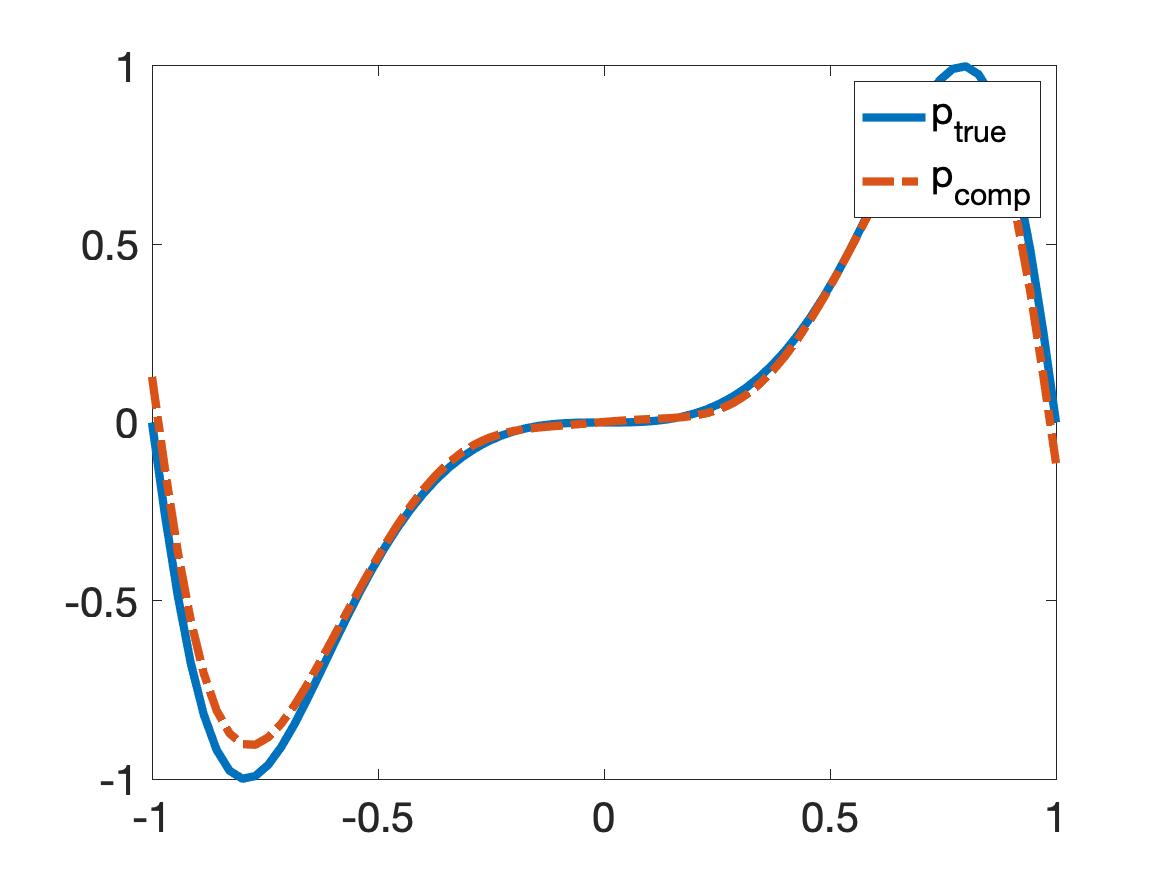}}
	\caption{\label{fig 1D} The true and computed source functions. Row 1 are the results obtained by solving \eqref{6.6}--\eqref{6.7} and row 2 are the results by the 1D version  of Algorithm \ref{alg 1}. 
	It is evident that solving Problem \ref{pro isp Dir} by using the ``sin and cosine" basis is not succesful. In contrast, the reconstructions using the basis $\{\Psi_n\}_{n \geq 1}$ are quite accurate.}
\end{center}
\end{figure}

\section{Concluding remarks} \label{sec con}

In this paper, we introduced a new approach to numerically compute the source function for general hyperbolic equations from the Cauchy boundary data.
In the first step, by truncating the Fourier series of the solution to this hyperbolic equation, we derive an approximation model, who solution directly provides the knowledge of the source. 
We then apply the quasi-reversibility method to solve this system.
The convergence of the quasi-reversibility method is rigorously proved. 
Satisfactory numerical examples illustrates the efficiency of our method.

\section*{Acknowledgments}

Thuy Le and Loc Nguyen are supported by US Army Research Laboratory and US Army Research Office grant W911NF-19-1-0044.
The authors would like to thank Dr. Michael V. Klibanov for many fruitful discussions.

 
\end{document}